\theoremstyle{plain}
\newtheorem{theorem}{Theorem}
\newtheorem{lemma}[theorem]{Lemma}
\newtheorem{proposition}[theorem]{Proposition}
\theoremstyle{definition}
\newtheorem{definition}{Definition}
\newtheorem{example}{Example}
\newtheorem{remark}{Remark}
\def\eqd{\stackrel{\mbox{\scriptsize{d}}}{=}}
\def\simiid{\stackrel{\mbox{\scriptsize{\rm iid}}}{\sim}}
\newcommand{\E}{\mathbb{E}}
\newcommand{\W}{\mathcal{W}}
\newcommand{\R}{\mathbb{R}}
\newcommand{\X}{\mathbb{X}}
\newcommand{\Y}{\mathbb{Y}}
\newcommand{\cov}{\textup{Cov}}
\definecolor{lightorange}{rgb}{1.0, 0.65, 0.0}
\definecolor{lightbrown}{rgb}{0.800, 0.400, 0.200}
\pgfplotsset{compat=1.9}
\title{Measures of Dependence based on Wasserstein distances}
\author{Marta Catalano\footnote{Luiss University, Italy. Email: \texttt{mcatalano@luiss.it}}~~and Hugo Lavenant\footnote{Bocconi University, Italy. Email: \texttt{hugo.lavenant@unibocconi.it}}}
\date{\today}
\begin{document}

\maketitle

\begin{abstract}
Measuring dependence between random variables is a fundamental problem in Statistics, with applications across diverse fields. While classical measures such as Pearson’s correlation have been widely used for over a century, they have notable limitations, particularly in capturing nonlinear relationships and extending to general metric spaces. In recent years, the theory of Optimal Transport and Wasserstein distances has provided new tools to define measures of dependence that generalize beyond Euclidean settings. This survey explores recent proposals, outlining two main approaches: one based on the distance between the joint distribution and the product of marginals, and another leveraging conditional distributions. We discuss key properties, including characterization of independence, normalization, invariances, robustness, sample, and computational complexity. Additionally, we propose an alternative perspective that measures deviation from maximal dependence rather than independence, leading to new insights and potential extensions. Our work highlights recent advances in the field and suggests directions for further research in the measurement of dependence using Optimal Transport.

\medskip

\noindent \emph{Keywords}. Concordance; Dependence; Invariances; Optimal transport; Robustness; Sample complexity; Wasserstein distances. 
\end{abstract}

\section{Introduction}

Let $X,Y$ be two random variables defined on the same probability space. A measure of dependence is a summary of the joint law $\mathscr{L}(X,Y)$ with a real number $D(X,Y)$ such that $D(X,Y)=0$ if $X$ and $Y$ are independent. Beyond this, there is no universal agreement on the properties that $D(X,Y)$ should satisfy. The \emph{desiderata} have evolved over time and vary depending on the context. Nevertheless, the fundamental role of dependence in Statistics 
and in the analysis of real-world interactions has made it an active topic of study for almost two centuries. In recent years, the field has seen a resurgence of interest driven by the new computational and analytical tools that were previously infeasible. 

The first and still most used measure of dependence is Pearson's linear correlation, defined and studied in the 19th century by the joint efforts of many scholars including Bravais, Galton, Edgeworth and Pearson; see \cite{Pearson1920} for a personal historic note.  The reasons for its success can be ascribed to the intuitive interpretation, the easy computations, and the neat asymptotic theory. However, some of the most relevant properties, such as the detection of independence, are confined to jointly Gaussian random variables, it does not detect non-linear relations, and it is only defined for random variables on $\mathbb{R}$. During the twentieth century, most of the literature has focused on defining measures of dependence for real-valued random variables with appealing properties that are not shared by Pearson's correlation, such as Spearman's $\rho$ \citep{Spearman1904}, Kendall's $\tau$ \citep{Kendall1938}, and many others. Nowadays, a major focus is on measuring the dependence of random vectors on $\mathbb{R}^m$ or random elements in more abstract metric spaces. This partly explains the recent success of Optimal Transport and Wasserstein distances in measuring dependence, due to their flexibility in defining a distance between probabilities on any metric space. The focus of this survey is to describe how they have been used and investigated in the recent literature. 

However, it should be noted that the first use of the Wasserstein distance to measure dependence between random variables on $\R$ can be traced back to Corrado Gini at the beginning of the 20th century; we refer to \cite{CifarelliRegazzini2017} for a historical account.
The Wasserstein distance on $\R$, equipped with the standard Euclidean distance, is intimately connected to Pearson's correlation. Indeed, this distance between two laws $\mathscr{L}(X)$ and $\mathscr{L}(Y)$ is based on the coupling $\mathscr{L}(X,Y)$ that maximizes Pearson's correlation, a.k.a. the optimal coupling or comonotonic pair. This link is \emph{not} useful to explain how the Wasserstein distance is now used to generalize a notion of correlation to arbitrary metric spaces, as we will describe in this work, but provides some intuition on why the Wasserstein distance on $\R$ had already appeared in Gini's work \citep{Gini1914} long before its modern development.

In the context of this work we will be using the Wasserstein distance because of its metric properties on the space of probability measures. A desirable feature for a measure of dependence is characterizing independence in the sense that $D(X,Y)=0$ if and only if $X$ and $Y$ are independent. We will describe two main ways of using the Wasserstein distance to ensure this in a natural way: (i) evaluate the Wasserstein distance between $\mathscr{L}(X,Y)$ and the independence coupling $\mathscr{L}(X) \otimes \mathscr{L}(Y)$; (ii) evaluate the average Wasserstein distance between $\mathscr{L}(Y)$ and $\mathscr{L}(Y|X)$. Several proposals in the literature fall into this framework and, at this level of generality, the Wasserstein distance could be substituted by other non-degenerate discrepancy measure between probabilities on metric spaces, provided that one is able to find compelling alternatives. For example, for random variables on $\R^m$ there are proposals involving mutual information \citep{Shannon1948} and a weighted $L_2$ distance between characteristic functions \citep{Szekely2007}, 
for random objects on more abstract metric and topological spaces the maximum mean discrepancy is a popular alternative (see, e.g., \cite{Gretton2005, deb2020measuring}. Interestingly, the seemingly unrelated proposal by \cite{Lyons2013} can also be reframed in terms of this metric \citep{Sejdinovic2013}.

Even if the general idea can be reformulated with more than one distance, there are several properties of the induced measure of dependence that vary according to the specific metric: first of all the notion of maximal dependence, which naturally allows one to normalize the measure into an index of dependence in $[0,1]$, but also invariances by marginal transformations, robustness properties, sample, and computational complexity. In this work we highlight many of these properties, reporting how the recent literature relying on Wasserstein distances has investigated them.

Often the focus is not on detecting independence but rather some form of functional relation between $X$ and $Y$, usually referred to as maximal dependence. For example, especially if $X$ and $Y$ have the same marginal distribution, a natural notion of maximal dependence is $X = Y$ almost surely. We argue that another and less explored use of the Wasserstein distance could be to measure the distance from maximal dependence rather than independence. This is the approach that has been used in \cite{Catalano2021,Catalano2024} to measure dependence between completely random measures and L\'evy measures in the context of Bayesian nonparametric Statistics. We show how some of the results therein can be adapted to real-valued random variables as well, with compelling benefits from both the point of evaluations and computations. In particular, under some symmetry assumptions and for fixed marginal distributions, we provide a new result (Theorem~\ref{th:max}) that finds the maximal Wasserstein distance from maximal dependence, and thus enables the definition of a new index of (maximal) dependence. This alternative approach of using the Wasserstein distance from complete dependence follows similar desiderata to the traditional measures of \emph{concordance} between real-valued random variables \citep{Scarsini1984}, which enable, for example, to distinguish positive from negative dependence. We wonder if the new computational and analytical tools from Optimal Transport could be used to extend these concepts to random vectors and more abstract metric spaces as well. \\

\emph{Structure of the work.} After recalling the definition of Wasserstein distance in Section~\ref{sec:wass}, in Section~\ref{sec:measures} we describe two different ways to use the Wasserstein distance to build a measure of dependence that characterizes independence. In Section~\ref{sec:index} we discuss different normalizations to obtain an index of dependence in $[0,1]$, whose analytic expressions are discussed in Section~\ref{sec:analytic}. In Section~\ref{sec:invariances} we investigate some invariances of the indices with respect to marginal transformations and relate them to the notion of maximal dependence. In Sections~\ref{sec:robustness}, \ref{sec:sample_comp}, \ref{sec:comp_comp} we discuss some important properties that have recently emerged in the literature: robustness, sample, and computational complexity.
In Section~\ref{sec:alternative} we describe an alternative strategy, where the role of independence as baseline coupling is substituted by maximal dependence. In Section~\ref{sec:further} we mention some interesting frameworks that have used the Wasserstein distance with flavors different from those we have described in this work. Conclusions are presented in Section~\ref{sec:conclusions}. \\

\emph{Notations.} 
$\mathscr{L}(X)$ denotes the law of the random variable $X$. $\mathcal{P}(\X)$ denotes the set of probability distributions on a space $\X$. We always consider a probability space $(\Omega,\mathcal{A},\mathbb{P})$ and $(X,Y) : \Omega \to \X \times \Y $ a pair of random variables valued in $\X$ and $\Y$ Polish spaces.

\section{Wasserstein distances}
\label{sec:wass}

Optimal transport is a rich and well-established theory that studies optimal ways to couple random variables and defines distances between their probability distributions, a.k.a. Wasserstein distances. We refer to \citet{Santambrogio2015,villani2009optimal} for general introductions to the topic, and to \citet{panaretos2020invitation,chewi2024statistical} for overviews of its use in Statistics.

In this survey we mostly use Wasserstein distances between probability distributions on Polish spaces $\mathbb{X} \times \mathbb{Y}$ or $\mathbb{Y}$. With these examples in mind, consider a metric space $(\mathbb{M},d)$ and let $P,Q$ be two probability distributions on $\mathbb{M}$. We denote by $\Gamma(P,Q)$ the set of couplings between $P$ and $Q$, that is, the set of joint laws $\mathscr{L}(Z,Z')$ on $\mathbb{M} \times \mathbb{M}$  with fixed marginals $\mathscr{L}(Z) = P$ and $\mathscr{L}(Z') = Q$.  
By a slight abuse of notation, we sometimes use $\Gamma(Z,Z')$ as $\Gamma(\mathscr{L}(Z),\mathscr{L}(Z'))$.
For any $p \geq 1$, we define the Wasserstein distance with ground metric $d$ and order $p$ as 
\begin{equation}
\label{def:wass}
\W_{d,p}(P,Q) = \left( \inf_{\gamma \in \Gamma(P,Q)} \E_{(Z,Z') \sim \gamma}(d(Z,Z')^p) \right)^{1/p}.  
\end{equation}
When $d$ and $p$ are clear from the context, we denote it simply by $\W$. A coupling $\gamma^*$ that attains the infimum always exists and it is called an optimal coupling. When $\gamma^* = \mathscr{L}(Z, T(Z))$ with $T : \mathbb{X} \to \mathbb{Y}$, the latter is termed an optimal transport map. 

We emphasize that $\W_{d,p}(\mathscr{L}(Z),\mathscr{L}(Z'))$ defines a distance between laws of random variables with finite $p$-th moments and thus takes in account the whole distribution of $Z$ and $Z'$, and not only a finite number of moments.
In the next sections we will discuss many properties of this distance that are useful to the measurement of dependence: analytic expressions in Section~\ref{sec:analytic}, topology induced by $\W$ in Section~\ref{sec:robustness}, sample and computational complexities in Section~\ref{sec:sample_comp} and \ref{sec:comp_comp} respectively.

\section{Measuring the proximity to independence}
\label{sec:measures}

An intuitive way to measure dependence between two random variables $(X,Y)$ is to measure their proximity to independence. There are two common characterizations of independence, one that is based on the joint law $\mathscr{L}(X,Y)$ and one that is based on the conditional law $\mathscr{L}(Y|X)$. Each characterization determines a natural way to use the Wasserstein distance to measure dependence.

Two random variables $(X,Y): \Omega \to \X \times \Y$ are independent if, equivalently,
\begin{enumerate}
\item $\mathscr{L}(X,Y) = \mathscr{L}(X)  \otimes \mathscr{L}(Y)$, 
\item $\mathscr{L}(Y|X) = \mathscr{L}(Y) \quad \mathscr{L}(X)\rm{-a.s.}$,
\end{enumerate}
where $\otimes$ denotes the product measure. From (i) one derives a \emph{joint} measure of dependence that evaluates the Wasserstein distance from the independent coupling on $\mathbb{X} \times \mathbb{Y}$, from (ii) a \emph{conditional} measure of dependence that evaluates the average Wasserstein distance between the conditional and the marginal distribution on $\mathbb{Y}$. This leads to, respectively,
\begin{equation} \label{defs}
\begin{aligned}
&D_{ \otimes}(X,Y) = \mathcal{W}(\mathscr{L}(X,Y),  \mathscr{L}(X)  \otimes \mathscr{L}(Y)), \\
&D_{|\bullet}(X,Y) =  \E_X(\mathcal{W}(\mathscr{L}(Y|X), \mathscr{L}(Y))^p) ^{1/p},
\end{aligned}
\end{equation}
where $\mathcal{W} = \mathcal{W}_{d,p}$ is a Wasserstein distance. Note that $D_\otimes$ requires a Wasserstein distance on $\mathcal{P}(\mathbb{X} \times \mathbb{Y})$ (and thus a distance $d$ on $\mathbb{X} \times \mathbb{Y}$), while $D_{|\bullet}$ requires a Wasserstein distance over the smaller space $\mathcal{P}(\mathbb{Y})$ (and thus a distance $d$ only over $\mathbb{Y}$).
This implies that the ground metric on $\mathbb{X}$ does not play a role in $D_{|\bullet}$, which can be seen as an advantage or a disadvantage depending on our knowledge of $\mathbb{X}$.

In the following, we will denote by $D$ any measure of dependence of the type $D_{ \otimes}$ or $D_{|\bullet}$. When we need to underline the dependence on $d,p$ we write $D(X,Y; d,p)$. As Wasserstein distances are distances, thus non-degenerate, we obtain the following.

\begin{lemma}
\label{th:independence}
Let $(X,Y):\Omega \to \mathbb{X} \times \mathbb{Y}$ random variables. Then, the following conditions are equivalent:
\begin{itemize}
\item[(i)] $(X,Y)$ are independent; 
\item[(ii)] $D_{ \otimes}(X,Y) = 0$; 
\item[(iii)] $D_{|\bullet}(X,Y) = 0$.
\end{itemize}
\end{lemma}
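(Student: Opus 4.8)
The plan is to prove the two equivalences (i)$\iff$(ii) and (i)$\iff$(iii) separately, in each case exploiting only one fact: a Wasserstein distance $\W_{d,p}$ is a genuine metric on the relevant space of probability measures, hence $\W_{d,p}(P,Q)=0$ if and only if $P=Q$. Everything else is a matter of matching this with the two characterizations of independence recalled just before the statement.

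For the equivalence (i)$\iff$(ii): by definition $D_\otimes(X,Y)=\W(\mathscr L(X,Y),\mathscr L(X)\otimes\mathscr L(Y))$, and since $\W$ is a metric on $\mathcal P(\X\times\Y)$, this quantity vanishes precisely when $\mathscr L(X,Y)=\mathscr L(X)\otimes\mathscr L(Y)$, which is exactly characterization (i) of independence from the list preceding the lemma. The only mild subtlety worth a sentence is that one should note both laws have finite $p$-th moment (or, more cleanly, observe that $\W_{d,p}$, extended with the value $+\infty$ when moments are infinite, still separates points), so that the metric property genuinely applies; under the standing assumption that the relevant Wasserstein distances are well-defined this is immediate.

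For the equivalence (i)$\iff$(iii): here $D_{|\bullet}(X,Y)=\big(\E_X[\W(\mathscr L(Y|X),\mathscr L(Y))^p]\big)^{1/p}$ is the $L^p(\mathscr L(X))$-norm of the nonnegative random variable $X\mapsto \W(\mathscr L(Y|X),\mathscr L(Y))$. An $L^p$ norm of a nonnegative function is zero if and only if the function is zero $\mathscr L(X)$-almost surely; combined again with the fact that $\W$ separates points on $\mathcal P(\Y)$, this gives $D_{|\bullet}(X,Y)=0 \iff \W(\mathscr L(Y|X),\mathscr L(Y))=0$ for $\mathscr L(X)$-a.e.\ $X \iff \mathscr L(Y|X)=\mathscr L(Y)$ $\mathscr L(X)$-a.s., which is characterization (ii) of independence. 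One then invokes the standard fact that (i) and (ii) are equivalent descriptions of independence to close the loop among all three statements.

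I do not expect a real obstacle here; the proof is essentially a two-line unwinding of definitions plus the metric axioms. The only point requiring a modicum of care is measurability and integrability: one should make sure the map $x \mapsto \W(\mathscr L(Y|X=x),\mathscr L(Y))$ is measurable (so that the expectation in $D_{|\bullet}$ makes sense) and that a regular conditional distribution $\mathscr L(Y|X)$ exists, which is guaranteed since $\Y$ is Polish. These are standard facts and I would either cite them or relegate them to a parenthetical remark rather than belabor them.
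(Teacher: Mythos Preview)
Your proposal is correct and matches the paper's approach exactly: the paper does not give a detailed proof but simply remarks, immediately before the lemma, that ``As Wasserstein distances are distances, thus non-degenerate, we obtain the following.'' Your unwinding of the definitions via the metric axioms (plus the $L^p$-norm observation for the conditional case) is precisely the argument the paper has in mind, just written out in full.
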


This property is often referred to as the \emph{characterization or detection of independence}, and it is strictly linked to the fact that $D_{ \otimes}$ and $D_{|\bullet}$ take into account the whole distribution of $(X,Y)$. Indeed, it typically does not hold for measures based on only the first two moments of the distribution, such as Pearson's correlation. 

A key point of the constructions above is that $\mathbb{X}$ and $\mathbb{Y}$ are not restricted to $\R$ or $\R^m$, but could in principle be defined on any Polish space. A major difference is that $D_{ \otimes}$ is symmetric in $(X,Y)$ if the distance $d$ is symmetric (see Remark~\ref{rem:symmetry} below), while $D_{|\bullet}$ is not:
by inverting the role of $X$ and $Y$ in $D_{|\bullet}$ we obtain a different measure, requiring only a distance on $d_\mathbb{X}$, which also characterizes independence.

\begin{remark}
\label{rem:symmetry}
For any distances $d_\mathbb{X}$, $d_\mathbb{Y}$ on $\X$, $\Y$ respectively, there is more than one way to define a distance on $\X \times \Y$.  The most common ones correspond to $\ell_q$ norms for any $q \geq 1$,
\[
d_{\mathbb{X} \times \mathbb{Y}}((x,y), (x',y')) = (d_\mathbb{X}(x,x')^{q} + d_\mathbb{Y}(y,y')^{q})^{1/q},
\]
which are symmetric in the sense that 
\[
d_{\mathbb{X} \times \mathbb{Y}}((x,y), (x',y')) = d_{\mathbb{Y} \times \mathbb{X}}((y,x), (y',x')).
\]
This property ensures the symmetry of $D_{\otimes}$ even if $\mathbb{X} \neq \mathbb{Y}$. However, one could also use asymmetric distances, as in \cite{nies2025transport}, where for $\alpha \in (0,+\infty)$,
\[
d_{\alpha}((x,y), (x',y')) = \alpha d_\mathbb{X}(x,x') +  d_\mathbb{Y}(y,y').
\]
This can be useful to ensure some specific properties, as we discuss in the next sections.
As noted in \citet[Theorem 4.13]{nies2025transport}, $D_{|\bullet}$ can be seen as a limiting case of $D_{\otimes}$ with distance $d_\alpha$ as $\alpha \to + \infty$. 
\end{remark}

\begin{remark}
One only needs $d$ to be non-degenerate to ensure that $D_{ \otimes}(X,Y)$ and $D_{|\bullet}(X,Y)$ characterize independence. Thus one could also use a transport cost $d$ that is not a distance, specifically if it is not symmetric or the triangular inequality does not hold. Similarly, one can also consider $\W_{d,p}^p$ instead of $\W_{d,p}$, as in \cite{mordant2022measuring} with $p=2$.
Interestingly, the same work shows that for random variables in $\mathbb{R}^m$, with $d$ the Euclidean distance and $\W = \W_{d,2}$, 
\begin{equation}
\label{def:reference_measure}
\mathcal{W}(\mathscr{L}(X,Y), P  \otimes Q)^2 
- \mathcal{W}(\mathscr{L}(X)  \otimes \mathscr{L}(Y), P \otimes Q)^2
\end{equation}
characterizes independence for any absolutely continuous reference measures $P, Q \in \mathcal{P}(\mathbb{R}^m)$. 
\end{remark}

In the next section we will see that $D$ alone is not informative of how much dependence there is between $X$ and $Y$. However, we highlight some situations where it can be useful per se: 
\begin{itemize}
\item To perform comparisons between two different laws for random vectors. Two variables $(X,Y)$ can be defined more dependent than $(X',Y')$ whenever $D(X,Y) \ge D(X',Y')$. 
\item To perform independence testing, the null hypothesis being $(X,Y)$ independent, that is,
$D(X,Y) = 0$. See \cite{Ozair2019, warren2021wasserstein, Liu2022}.
\item To provide a penalization term when estimating joint variables through a loss minimization, forcing the variables to be (close to) independent. See \cite{XiaoWang2019}.
\end{itemize}

\section{Building an index of dependence}
\label{sec:index}

The quantities $D(X,Y)$ defined in the previous section are good tools to measure independence, but by themselves they are not informative about \emph{how much} dependence there is between $X$ and $Y$. To answer this question, we should also understand what is the ``most dependent'' relationship between $X$ and $Y$. As we shall see, there is no definitive and universal answer to this question. 

The road map to build an index of dependence from a measure $D(X,Y)$ of independence is as follows. 
\begin{enumerate}
\item Find $U(X,Y)$ an upper bound of $D(X,Y)$, that is, such that for all $(X,Y)$,
\begin{equation}
\label{eq:upper_bound}
D(X,Y) \leq U(X,Y).
\end{equation}
\item Identify the equality cases in~\eqref{eq:upper_bound}, which determine a notion of maximal dependence.
\item Define the index of dependence as 
\begin{equation*}
I(X,Y) = \frac{D(X,Y)}{U(X,Y)}.
\end{equation*}
\end{enumerate}

In such a construction, since $D(X,Y) \geq 0$ and thanks to Lemma~\ref{th:independence}, $I(X,Y) \geq 0$ with equality if and only if $(X,Y)$ are independent. Moreover, $I(X,Y) \leq 1$, with equality if and only if there is equality in~\eqref{eq:upper_bound}. To underline the dependence on $d,p$ we will sometimes write $I(X,Y; d,p)$.

Ideally, the upper bound $U(X,Y)$ should: (i) be interpretable; (ii) determine equality cases in~\eqref{eq:upper_bound} that correspond to what we intuitively expect for maximal dependence; (iii) depend only on the marginal laws of $\mathscr{L}(X)$ and $\mathscr{L}(Y)$. Note that for the same $D(X,Y)$, we may find different upper bounds $U(X,Y)$ which lead to different indices and potentially different notions of maximal dependence.

\subsection{The joint index of dependence}

Let us illustrate an example of this technique with the index $D_\otimes(X,Y)$, \textcolor{black}{defined in~\eqref{defs}}, for which finding a meaningful upper bounds is a difficult task in general.

\begin{proposition}[\textcolor{black}{{Proposition 4.12 and Theorem 5.2 in \cite{nies2025transport}}}]
\label{prop:nies_extremal}
Let $(\mathbb{X},d_\mathbb{X})$ and $(\mathbb{Y},d_\mathbb{Y})$ be metric spaces and consider $d((x,y),(x',y')) = d_\mathbb{X}(x,x') + d_\mathbb{Y}(y,y')$. Then with $Y'$ an independent copy of $Y$, 
\begin{equation*}
D_{\otimes}(X,Y;d;p)^p \leq \E \left( d_\mathbb{Y}(Y,Y')^p \right),
\end{equation*}
with equality if and only if $Y = f(X)$ a.s. for $f : \mathbb{X} \to \mathbb{Y}$ a $1$-Lispchitz function on the support of $\mathscr{L}(X)$. 
\end{proposition}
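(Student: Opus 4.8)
The plan is to prove the upper bound and the equality case separately, using the structure of the cost $d((x,y),(x',y')) = d_\X(x,x') + d_\Y(y,y')$.

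\emph{The upper bound.} Let $(X,Y')$ be the independence coupling, so that $\law(X,Y') = \law(X) \otimes \law(Y)$ and $Y'$ is an independent copy of $Y$. To bound $D_\otimes(X,Y;d,p) = \W_{d,p}(\law(X,Y), \law(X)\otimes\law(Y))$ from above, it suffices to exhibit \emph{one} coupling between $\law(X,Y)$ and $\law(X)\otimes\law(Y)$ and estimate its cost. The natural choice is the coupling that keeps the first coordinate fixed: on a suitable probability space, realize $(X,Y) \sim \law(X,Y)$, then draw $Y'$ conditionally independent of $Y$ given $X$ with $\law(Y' \mid X) = \law(Y)$; then $((X,Y),(X,Y'))$ is a coupling of the required marginals. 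Its transport cost under $d$ is $\E(d_\X(X,X)^p + \ldots)$ — wait, more precisely, since the first coordinates agree, $d((X,Y),(X,Y')) = d_\Y(Y,Y')$, so the cost equals $\E(d_\Y(Y,Y')^p)$. Since $Y'$ is drawn from $\law(Y)$ independently of $Y$ given $X$, and we want the bound in terms of an \emph{unconditional} independent copy, I would then argue that $\E(d_\Y(Y,Y')^p)$ under this conditional-independence coupling equals $\E(d_\Y(Y,Y'')^p)$ where $Y''$ is a genuine independent copy of $Y$: indeed both equal $\int\int d_\Y(y,y')^p \, \law(Y)(dy)\,\law(Y)(dy')$ after integrating out $X$. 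This gives $D_\otimes(X,Y;d,p)^p \le \E(d_\Y(Y,Y')^p)$.

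\emph{The equality case.} For the ``if'' direction, suppose $Y = f(X)$ a.s.\ with $f$ being $1$-Lipschitz on the support of $\law(X)$. Then $\law(X,Y) = \law(X, f(X))$ and $\law(X)\otimes\law(Y) = \law(X, f(X'))$ with $X'$ an independent copy of $X$. I would show that the optimal coupling is forced to be the one above (fixing the $X$-coordinate): given any coupling $\pi$ of these two measures, project onto the pair of $\X$-coordinates to get a coupling of $\law(X)$ with itself; the point is that because $f$ is $1$-Lipschitz, $d_\Y(f(x_1), f(x_2)) \le d_\X(x_1,x_2)$, so $d((x_1,f(x_1)),(x_2,f(x_2))) = d_\X(x_1,x_2) + d_\Y(f(x_1),f(x_2)) \ge d_\Y(f(x_1),f(x_2))$, with the ``wasted'' part $d_\X(x_1,x_2)$ minimized (to zero) exactly when $x_1 = x_2$. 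One then checks this forces the cost of any coupling to be at least $\E(d_\Y(Y,Y')^p)$ — so combined with the upper bound, equality holds. For the ``only if'' direction, suppose $D_\otimes(X,Y;d,p)^p = \E(d_\Y(Y,Y')^p)$. Take an optimal coupling $\pi$; its cost is $\E_\pi(d_\X(X_1,X_2)^p + \text{cross terms} + \ldots)$ — here I would want $p=1$ first, where the cost splits additively as $\E_\pi(d_\X(X_1,X_2)) + \E_\pi(d_\Y(Y_1,Y_2))$. The second term is at least $\W_{d_\Y,1}(\law(Y),\law(Y))$-type quantity; comparing with the bound shows $\E_\pi(d_\X(X_1,X_2)) = 0$, hence $X_1 = X_2$ $\pi$-a.s., i.e.\ the optimal coupling fixes the $\X$-coordinate. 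This means the second $\Y$-coordinate is a deterministic function of the first $\X$-coordinate equal in law to $\law(Y)$, while simultaneously $\law(Y_1 \mid X_1)$ must equal $\law(Y)$ unconditionally for the cost to match; unwinding this forces $Y$ to be a deterministic function $f$ of $X$, and the Lipschitz property of $f$ then comes from requiring that the coupling also be optimal (not just feasible), which via cyclical monotonicity of the optimal plan for the cost $d$ yields the $1$-Lipschitz constraint. For general $p \ge 1$ one replaces the additive splitting by an application of Minkowski/triangle-type inequalities in $L^p$, or reduces to the structure of $\W_p$ on product spaces.

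\emph{Main obstacle.} The cleanest part is the upper bound. The delicate part is the ``only if'' direction of the equality case: extracting that $Y$ is a genuine \emph{function} of $X$ (not merely that some optimal coupling fixes the first coordinate) and that this function is $1$-Lipschitz. I expect the right tool is the characterization of optimal plans via $c$-cyclical monotonicity for the cost $d$: a plan supported on $\{x_1 = x_2\}$ that is also $d$-cyclically monotone in the $\Y$-coordinates, combined with the rigidity of the additive cost, should pin down both the functional relation and the Lipschitz bound. Handling general $p$ rather than $p=1$ adds bookkeeping (the cost is $(d_\X + d_\Y)^p$, which does not split), but the geometric mechanism — wasted mass on the $\X$-coordinate is strictly costly unless $x_1 = x_2$ — is the same, and I would lean on the fact that along the optimal plan one can still compare coordinatewise.
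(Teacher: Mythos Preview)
The paper does not give its own proof of this proposition: it is quoted as Proposition~4.12 and Theorem~5.2 of \cite{nies2025transport}, with the inequality part also attributed to \citet{mori2020earth}. So there is nothing in the paper to compare your argument against; below I comment only on the soundness of the sketch itself.

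Your upper bound is correct and is the standard argument.

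In the ``if'' direction there is a slip that breaks the argument as written. A generic point of the second marginal $\law(X)\otimes\law(Y)$ is $(x_2,y_2)$ with $y_2$ drawn from $\law(Y)$ \emph{independently} of $x_2$; it is not of the form $(x_2,f(x_2))$. Hence your displayed inequality for $d((x_1,f(x_1)),(x_2,f(x_2)))$ compares the wrong objects. The repair is straightforward: for any $((x_1,f(x_1)),(x_2,y_2))$ in the support of a coupling, $1$-Lipschitzness and the triangle inequality in $\Y$ give
\[
d_\X(x_1,x_2) + d_\Y(f(x_1),y_2) \;\ge\; d_\Y(f(x_1),f(x_2)) + d_\Y(f(x_1),y_2) \;\ge\; d_\Y(f(x_2),y_2),
\]
and under $\law(X)\otimes\law(Y)$ the pair $(f(X_2),Y_2)$ consists of two independent copies of $Y$, yielding the matching lower bound $\E[d_\Y(Y,Y')^p]$.

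The ``only if'' direction has a more serious gap. For $p=1$ you split the optimal cost as $\E_\pi[d_\X(X_1,X_2)] + \E_\pi[d_\Y(Y_1,Y_2)]$ and then try to force the first term to vanish by lower-bounding the second. But under an arbitrary optimal $\pi$ the pair $(Y_1,Y_2)$ is merely \emph{some} coupling of $\law(Y)$ with itself, so the only general lower bound is $\W_{d_\Y,1}(\law(Y),\law(Y))=0$; there is no reason to have $\E_\pi[d_\Y(Y_1,Y_2)] \ge \E[d_\Y(Y,Y')]$. Consequently you cannot conclude $X_1=X_2$ $\pi$-a.s. The subsequent claim that ``the second $\Y$-coordinate is a deterministic function of the first $\X$-coordinate'' is also not justified: for the $X$-fixing coupling (which is always feasible with cost exactly $\E[d_\Y(Y,Y')^p]$, for \emph{any} $(X,Y)$), $Y_2$ is independent of $X_1$, not a function of it. What actually needs to be shown is that \emph{no} coupling does strictly better than the $X$-fixing one, and extracting from this the structure of $\law(Y|X)$ requires a genuine rigidity argument (equality cases in the Lipschitz/triangle chain above, or $c$-cyclical monotonicity for the additive cost) that your sketch does not supply. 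This is precisely the part the survey flags as new in \cite{nies2025transport} relative to \cite{mori2020earth}.
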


\textcolor{black}{The inequality in Proposition~\ref{prop:nies_extremal} was already proved in Theorem 3.1 of \citet{mori2020earth}, but without a proof for the equality case.}
The upper bound $\E \left( d_\mathbb{Y}(Y,Y')^p \right)$, when $p=1$ and $\Y = \R$, is \emph{Gini's mean difference} introduced by Gini in 1912; see \citet{yitzhaki2003gini} for a discussion and a historical account. It measures the spread of the distribution of $Y$, and in this sense it can be considered as an alternative to the variance, which corresponds, up to a factor $1/2$, to the case $p=2$. 
When $\Y= \R$, Gini and Ramasubban in the 50s and 60s considered the case of an arbitrary $p \geq 1$, calling it generalized mean difference \citep{yitzhaki2003gini}. As we work here in the context of metric spaces, we call $\E \left( d_\mathbb{Y}(Y,Y')^p \right)$ a generalized mean discrepancy.

In this result, it is crucial that $d$ is the $\ell_1$ norm on the product space, and in this case there is no other extra assumption needed: $(\mathbb{X},d_\mathbb{X})$ and $(\mathbb{Y},d_\mathbb{Y})$ can be generic metric spaces. 
In the case where $d$ is the $\ell_2$ norm, much less is known about a meaningful upper bounds, see Remark~\ref{rmk:upper_bound_W2} below.

Consequently, with $\tilde{U}_{\otimes}(X,Y) = ( \E ( d_\mathbb{Y}(Y,Y')^p ))^{1/p}$ which depends only on $\mathscr{L}(Y)$, we define the index
\begin{equation*}
\tilde{I}_{\otimes}(X,Y) = \frac{D_{\otimes}(X,Y)}{\tilde{U}_{\otimes}(X,Y)}.
\end{equation*}
It satisfies $\tilde{I}_{\otimes}(X,Y) \in [0,1]$, with $\tilde{I}_{\otimes}(X,Y) = 0$ if and only $X$ and $Y$ are independent, and $\tilde{I}_{\otimes}(X,Y) = 1$ if and only if $Y = f(X)$ a.s. for some $1$-Lipschitz function $f$. As detailed in \cite{nies2025transport}, by choosing $d = d_\alpha((x,y),(x',y')) = \alpha d_\mathbb{X}(x,x') + d_\mathbb{Y}(y,y')$ for $\alpha > 0$, the index $\tilde{I}_{\otimes}(X,Y;d_\alpha)$ becomes equal to $1$ if and only if  $Y = f(X)$ for some $\alpha$-Lipschitz function $f$. 

Note that we have broken the symmetry between $X$ and $Y$. If we choose, with the same assumptions, $\tilde{U}_{\otimes}(X,Y) = ( \E ( d_\mathbb{X}(X,X')^p ))^{1/p}$, then the index is equal to $1$ if and only $X = g(Y)$ a.s., for $g : \mathbb{Y} \to \mathbb{X}$ a $1$-Lipschitz function. A way to restore symmetry is by defining $U_\otimes(X,Y) = \min\{ \E \left( d_\mathbb{X}(X,X')^p \right), \E \left( d_\mathbb{Y}(Y,Y')^p \right)  \}^{1/p}$, that is,
\begin{equation*}
I_{\otimes}(X,Y) = \frac{D_{\otimes}(X,Y)}{\min\{ \E \left( d_\mathbb{X}(X,X')^p \right), \E \left( d_\mathbb{Y}(Y,Y')^p \right)\}^{1/p}},
\end{equation*}
which is the proposal in \cite{mori2020earth}.
Clearly this index is still in $[0,1]$, and is equal to $1$ if and only if $Y = f(X)$ a.s. \emph{or} $X = g(Y)$ a.s. 
for functions $f,g$ which are $1$-Lipschitz on the support of $\mathscr{L}(X)$ and $\mathscr{L}(Y)$ respectively. 
This is the definition of $I_{\otimes}$ we will keep in this survey. 
\textcolor{black}{An alternative proposal in \cite{nies2025transport} is to directly scale the distance function on the product space with the generalized mean discrepancies, that is, 
\begin{equation*}
d_{\mathscr{L}(X),\mathscr{L}(Y)}((x,y),(x',y')) = \frac{d_\mathbb{X}(x,x')}{\E \left( d_\mathbb{X}(X,X')^p \right)^{1/p}} + \frac{d_\mathbb{Y}(y,y')}{\E \left( d_\mathbb{Y}(Y,Y')^p \right)^{1/p}},
\end{equation*}
and define the index as $D_{\otimes}(X,Y;d_{\mathscr{L}(X),\mathscr{L}(Y)};p)$, which again can be proved to be in $[0,1]$ with a well-characterized equality case \cite[Proposition 6.6]{nies2025transport}.}

\subsection{The conditional index of dependence}

Next we discuss the same technique with the index $D_{|\bullet}$, \textcolor{black}{defined in~\eqref{defs}}. Interestingly, for random variables in $\R$ endowed with the Euclidean distance and $p=1$, 
this proposal can be traced back to works of Corrado Gini between 1911 and 1914,
as described in \cite{CifarelliRegazzini2017}.

\begin{proposition}[Section 9 in \cite{CifarelliRegazzini2017}, Theorem 2.2 in \cite{wiesel2022measuring}, Theorem~1 in \cite{borgonovo2024global}]
With $Y'$ an independent copy of $Y$, 
\begin{equation*}
D_{|\bullet}(X,Y;d_\Y,p)^p \leq \E  \left(d_\mathbb{Y}(Y,Y')^p \right) ,
\end{equation*}
with equality if and only if $Y = f(X)$ a.s. for a measurable function $f : \mathbb{X} \to \mathbb{Y}$. 
\end{proposition}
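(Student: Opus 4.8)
The plan is to bound each conditional Wasserstein distance from above by inserting the independence coupling as a competitor, integrate out $X$ via Tonelli, and then extract the equality case from the $c$-cyclical monotonicity of optimal couplings.

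First, for the inequality: fix a disintegration $x \mapsto \mathscr{L}(Y|X=x)$ (it exists since $\Y$ is Polish). For each $x$, the product measure $\mathscr{L}(Y|X=x) \otimes \mathscr{L}(Y)$ belongs to $\Gamma(\mathscr{L}(Y|X=x),\mathscr{L}(Y))$, so by the definition~\eqref{def:wass} of $\W$,
\[
\W(\mathscr{L}(Y|X=x),\mathscr{L}(Y))^p \;\le\; \int_{\Y}\!\int_{\Y} d_\Y(y,y')^p\,\mathscr{L}(Y|X=x)(dy)\,\mathscr{L}(Y)(dy').
\]
Integrating in $x \sim \mathscr{L}(X)$, swapping the order of integration (everything is nonnegative), and using $\int_{\X} \mathscr{L}(Y|X=x)(\cdot)\,\mathscr{L}(X)(dx) = \mathscr{L}(Y)$ (law of total probability), the right-hand side collapses to $\int_\Y\int_\Y d_\Y(y,y')^p\,\mathscr{L}(Y)(dy)\,\mathscr{L}(Y)(dy') = \E(d_\Y(Y,Y')^p)$ with $Y'$ an independent copy of $Y$. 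Since $D_{|\bullet}(X,Y;d_\Y,p)^p = \E_X(\W(\mathscr{L}(Y|X),\mathscr{L}(Y))^p)$ by~\eqref{defs}, this is exactly the claimed bound.

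For the equality case the ``$\Leftarrow$'' direction is quick: if $Y = f(X)$ a.s.\ then $\mathscr{L}(Y|X=x) = \delta_{f(x)}$ for $\mathscr{L}(X)$-a.e.\ $x$, the only coupling of a Dirac mass with $\mathscr{L}(Y)$ is the product, so the displayed inequality holds with equality for a.e.\ $x$, hence after integration. The ``$\Rightarrow$'' direction is where the work lies: equality forces, for $\mathscr{L}(X)$-a.e.\ $x$, that $\mathscr{L}(Y|X=x) \otimes \mathscr{L}(Y)$ is a $\W_{d_\Y,p}$-optimal coupling. I would first record the structural fact that, for a.e.\ $x$, $\operatorname{supp}(\mathscr{L}(Y|X=x)) \subseteq \operatorname{supp}(\mathscr{L}(Y))$: indeed $\mathscr{L}(Y)$ is the $\mathscr{L}(X)$-average of the $\mathscr{L}(Y|X=x)$, so the open $\mathscr{L}(Y)$-null set $\operatorname{supp}(\mathscr{L}(Y))^c$ is $\mathscr{L}(Y|X=x)$-null for a.e.\ $x$. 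Then I would invoke that the support of any optimal coupling is $c$-cyclically monotone for $c = d_\Y^p$ (applicable since $c$ is continuous and the cost $\E(d_\Y(Y,Y')^p)$ is finite under the standing moment assumption; see, e.g., \citet{villani2009optimal}): for any $a,b \in \operatorname{supp}(\mathscr{L}(Y|X=x))$, both pairs $(a,b)$ and $(b,a)$ lie in $\operatorname{supp}(\mathscr{L}(Y|X=x)) \times \operatorname{supp}(\mathscr{L}(Y)) = \operatorname{supp}(\mathscr{L}(Y|X=x) \otimes \mathscr{L}(Y))$, so two-point monotonicity gives $d_\Y(a,b)^p + d_\Y(b,a)^p \le d_\Y(a,a)^p + d_\Y(b,b)^p = 0$, forcing $a = b$. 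Hence $\mathscr{L}(Y|X=x)$ is a single Dirac mass $\delta_{f(x)}$ for a.e.\ $x$. Finally, since $y \mapsto \delta_y$ is a homeomorphism onto the closed set of Diracs in $\mathcal{P}(\Y)$ and $x \mapsto \mathscr{L}(Y|X=x)$ is measurable, a measurable selection produces a measurable $f$ (extended arbitrarily off the a.e.\ set), and $\mathscr{L}(Y|X=x) = \delta_{f(x)}$ a.s.\ reads $Y = f(X)$ a.s.

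The hard part is the ``$\Rightarrow$'' direction, specifically turning ``the product coupling is optimal'' into ``the conditional law is degenerate'': this is false for generic pairs of measures (two measures with disjoint supports have the product as their only, hence optimal, coupling), so one must genuinely use $\operatorname{supp}(\mathscr{L}(Y|X=x)) \subseteq \operatorname{supp}(\mathscr{L}(Y))$, and one cannot strengthen this to $\mathscr{L}(Y|X=x) \ll \mathscr{L}(Y)$, as $Y = X \sim \mathrm{Unif}[0,1]$ shows. If one prefers to avoid quoting cyclical monotonicity, the same conclusion follows from an explicit swap: were $\mathscr{L}(Y|X=x)$ to charge two disjoint balls (both then also charged by $\mathscr{L}(Y)$), transferring a small amount of product mass from the two off-diagonal products of these balls to the two on-diagonal products preserves both marginals and strictly decreases the cost, contradicting optimality.
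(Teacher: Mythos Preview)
Your argument is correct. The paper does not supply its own proof here but, citing \cite{borgonovo2024global}, singles out as the key ingredient the strict convexity of $t \mapsto \W_{d_\Y,p}^p(P,(1-t)\delta_{y_1}+t\delta_{y_2})$ for $y_1 \neq y_2$: one writes $\mathscr{L}(Y|X=x)=\int \delta_y\,\mathscr{L}(Y|X=x)(dy)$, applies Jensen for the convex functional $Q \mapsto \W_p^p(Q,\mathscr{L}(Y))$, and uses strict convexity between Diracs to force the equality case. Your route is different in presentation though close in spirit: instead of invoking convexity of $\W_p^p$ in its first argument, you feed in the product coupling as a competitor (which is exactly the coupling underlying that Jensen step) and then, for the equality case, read off degeneracy of $\mathscr{L}(Y|X=x)$ from two-point $c$-cyclical monotonicity of the product coupling's support. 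What your approach buys is a completely localized argument at each fixed $x$, a transparent explanation of why the inclusion $\operatorname{supp}\mathscr{L}(Y|X=x)\subseteq\operatorname{supp}\mathscr{L}(Y)$ is indispensable (and your counterexamples showing it cannot be dropped or strengthened to absolute continuity are nice), and no need to pass from strict convexity on two-point mixtures to arbitrary mixtures. The convexity phrasing, conversely, packages the mechanism as a single functional-analytic statement and makes it immediately clear, as the paper remarks, that $d_\Y$ need not be a genuine metric; your swap argument in fact shares this generality, requiring only $d_\Y\ge 0$, $d_\Y(y,y)=0$, and non-degeneracy.
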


The upper bound is similar to the one for $D_\otimes$: it is again the the generalized mean discrepancy of the variable $Y$.
As emphasized in \cite{borgonovo2024global}, the key mathematical property to prove this proposition is the strict convexity of  
$t \mapsto \W_{d_\Y,p}^p(P,(1-t) \delta_{y_1} + t \delta_{y_2})$ when $y_1 \neq y_2$; $d_{\mathbb{Y}}$ does not need to be a distance for this to hold.
As a consequence, with $U_{|\bullet}(X,Y) = ( \E ( d_\mathbb{Y}(Y,Y')^p ))^{1/p}$, we define the index
\begin{equation*}
I_{|\bullet}(X,Y) = \frac{D_{|\bullet}(X,Y)}{U_{|\bullet}(X,Y)}.
\end{equation*}
It satisfies $I_{|\bullet}(X,Y) \in [0,1]$, with $I_{|\bullet}(X,Y) = 0$ if and only $X$ and $Y$ are independent, and $I_{|\bullet}(X,Y) = 1$ if and only if $Y = f(X)$ a.s. for some measurable function $f$.

\subsection{A more principled way}
\label{sec:principled}

The upper bounds $U(X,Y)$ described above are \emph{ad hoc} for the type of distances involved. A more canonical way would be to define $U(X,Y)$ as the supremum over all possible couplings
\begin{equation}
\label{eq:definition_U_sup}
U(X,Y) = \sup_{\mathscr{L}(X',Y') \in \Gamma(X,Y)} D(X',Y'),
\end{equation}
which is for instance the proposal of \cite{mordant2022measuring}. In the case $D = D_\otimes$, the supremum is attained as $\Gamma(X,Y)$ is compact for the same topology making $D_\otimes$ continuous \citep[Section 2]{mordant2022measuring}. In the case $D = D_{|\bullet}$, the situation is more delicate as $\Gamma(X,Y)$ may not be compact for a topology making $D_{|\bullet}$ lower semi-continuous. 
By construction $D \leq U$, and, provided the supremum is attained, for any $(X,Y)$, there exists $(X',Y')$ such that $X' \eqd X$, $Y' \eqd Y$ and $I(X,Y) = 1$. However, the main difficulty in this case is to characterize the couplings yielding maximal dependence, that is, realizing the supremum.

Note that both $U_{|\bullet}(X,Y)$ and $U_\otimes(X,Y)$ as defined above do not fit the definition in \eqref{eq:definition_U_sup}.
Indeed, take $X,Y$ Bernoulli random variables with parameters $p_X,p_Y \in (0,1)$ and $p_X \neq p_Y$. Then for every measurable function $f$, $f(X)$ has a different distribution than $Y$. Thus for every coupling $(X',Y')$, $I_{|\bullet}(X',Y') < 1$ and $I_{\otimes}(X',Y') < 1$. In contrast, if $I$ is built by normalizing with $U$ as in~\eqref{eq:definition_U_sup} by construction there exists $(X',Y')$ such that $I(X',Y') = 1$.

\begin{remark}
\label{rmk:upper_bound_W2}
To illustrate the difficulty in computing the supremum in~\eqref{eq:definition_U_sup}, consider the case $\mathbb{X} = \mathbb{Y} = [0,1]$, and $d((x,y), (x',y')) = \sqrt{|x-x'|^2 + |y-y'|^2}$ is the Euclidean distance, that is, we take the norm $\ell_2$ in the product space. Let us take $X,Y$ uniform random variables over $[0,1]$. 
We denote by $\lambda$, $\lambda^2$ respectively the uniform measure on $[0,1]$ and $[0,1]^2$, so that $X \sim \lambda$, $Y \sim \lambda$ and $(X,Y) \sim \lambda^2$ if $X,Y$ are independent. To our knowledge, the computation of the supremum in~\eqref{eq:definition_U_sup} is still an open question when $D = D_\otimes(\cdot;d,2)$. In other terms, with $\gamma = \mathscr{L}(X,Y)$, the computation of 
\begin{equation*}
\sup_\gamma \left\{ \W(\gamma,\lambda^2) \ : \ \gamma \in \Gamma(\lambda,\lambda) \right\},
\end{equation*}
where $\W$ is the classic quadratic Wasserstein distance over $[0,1]^2$, has not been solved. 
A natural conjecture is that the supremum is reached when $X = Y$, or $X = 1-Y$ (that is, $\gamma$ concentrated on one of the two diagonals of $[0,1]^2$), but a proof is lacking. The difficulty in solving such a seemingly simple problem has been noted in \cite{mordant2022measuring,deKeyser2025high}.
Note that Proposition~\ref{prop:nies_extremal} does not solve the problem, as the cost in the latter is the Manhattan distance $d((x,y), (x',y')) = (|x-x'| + |y-y'|)$ instead of the Euclidean distance.
It illustrates how a small change in the cost function greatly impacts the search for upper bounds. 
\end{remark}

\subsection{Other possibilities to normalize}

Another possibility is to choose directly the maximal and minimal dependence, and then build an index that detects the extremal cases, as described in \cite{Marti2017}. For given marginal laws $\mathscr{L}(X)$ and $\mathscr{L}(Y)$, let $C_1(X,Y)$ and $C_0(X,Y)$ denote two sets of couplings prescribed by the user, which will correspond to maximal and minimal dependence respectively. 
For example, for real-valued random variables, we could put in $C_0(X,Y)$ the independent coupling, and in $C_1(X,Y)$ the two monotonic couplings between $X$ and $Y$, that is, the non-decreasing and the non-increasing one.
Importantly, we may allow $C_0(X,Y)$ or $C_1(X,Y)$ to contain more than one element. With
\begin{equation*}
\W(\mathscr{L}(X,Y), C_i) = \min_{\gamma \in C_i(X,Y)} \W(\mathscr{L}(X,Y),\gamma),
\end{equation*}
for $i=0,1$, we define $I(X,Y)$ as
\begin{equation*}
\frac{\W(\mathscr{L}(X,Y), C_0)}{\W(\mathscr{L}(X,Y), C_0) + \W(\mathscr{L}(X,Y), C_1)}.
\end{equation*}
By construction $I(X,Y) \in [0,1]$, with $I(X,Y) = i$ if and only if $(X,Y) \in C_i(X,Y)$ for $i=0,1$.
Thus the sets $C_0(X,Y)$ and $C_1(X,Y)$ are user-prescribed, rather than imposed by the choice of an upper bound $U(X,Y)$. 

\section{Analytic expressions}
\label{sec:analytic}

An issue with optimal transport is that $\W$ usually lacks an analytic expression. This is because, although an \emph{optimal coupling} that realizes the infimum in \eqref{def:wass} is known to exist, its expression is not known in general. We comment on a few cases where such an expression is available. 

\subsection{One dimensional space}

In one dimension $\W_p(\mathscr{L}(X),\mathscr{L}(Y))$ coincides with the $L^p$ distance between the inverse cumulative distribution functions, a.k.a. the quantile functions. In particular, if $\mathbb{Y} = \mathbb{R}$, with $F_Y(t) = \mathbb{P}(Y \leq t)$ and $F_{Y|X}(t) = \mathbb{P}(Y \leq t | X)$ the cumulative distribution functions of $Y$ and $Y|X$, 
\begin{equation}
\label{eq:I_cond_cdf}
D_{|\bullet}(X,Y) = \left( \E_{X} \left( \int_{0}^{1} \left|F_{Y|X}^{-1}(t) - F_{Y}^{-1}(t) \right|^p dt \right) \right)^{1/p}.
\end{equation}
This formula is useful to the extent that the quantile functions and integrals are tractable.

On the other hand, even if if $X$ and $Y$ are real-valued random variables, computing $D_\otimes(X,Y)$ implies solving an optimal transport problem between two-dimensional probabilities $\mathscr{L}(X,Y)$ and $\mathscr{L}(X) \otimes \mathscr{L}(Y)$, 
for which we do not have explicit formulas involving the quantile functions. One typically needs to estimate $D_\otimes$ with samples and solve the optimal transport problem numerically as discussed below. 

\subsection{The Gaussian case}

If $X,Y$ are random Gaussian vectors in $\R^m$ of means $a_X, a_Y$ and covariances $\Sigma_X, \Sigma_Y$, and $d$ is the Euclidean distance, then $\W_{d,2}^2(\mathscr{L}(X),\mathscr{L}(Y))$ equals
\begin{equation*}
\| a_X - a_Y \|^2 + \mathrm{tr} \left( \Sigma_X + \Sigma_Y - 2 (\Sigma_X^{1/2} \Sigma_Y \Sigma_X^{1/2})^{1/2}  \right),
\end{equation*}
where $\mathrm{tr}$ denotes the trace of a matrix and $A^{1/2}$ the positive semi-definite symmetric square root of a symmetric matrix $A$ \citep{gelbrich1990formula}.

This expression can be used for an analytical formula of $I_{|\bullet}(\cdot;d,p)$ when $d$ is Euclidean and $p=2$ if all conditional laws $\mathscr{L}(Y|X)$ are Gaussians, see e.g. \citet[Proposition 3]{borgonovo2024global}.

To use this formula for $D_\otimes$, one needs $d((x,y),(x',y')) = \sqrt{\| x-x' \|^2+\| y - y' \|^2}$ the Euclidean distance and $p=2$, which is \emph{not} the assumption of Proposition~\ref{prop:nies_extremal}. Thus, even if $\mathbb{X} = \mathbb{Y} = \mathbb{R}$ and $(X,Y)$ is a Gaussian vector, there is no known analytical expression for the index $I_\otimes(X,Y)$ proposed in \cite{mori2020earth,nies2025transport}.

On the other hand, the proposal of \cite{mordant2022measuring}, when $(X,Y)$ is a Gaussian vector in $\R^{m}$ with $X\in \R^{m_1}$, $Y\in \R^{m_2}$, is to use $D_\otimes(X,Y;d,p=2)^2$ with $d$ the Euclidean distance.
In this case they are able to solve Problem~\eqref{eq:definition_U_sup} 
when restricting the supremum to 
Gaussian couplings in $\Gamma(X,Y)$.
With this computation they can normalize $D_\otimes^2$ and 
obtain an index of dependence which can be computed explicitly as follows. 
Let $\lambda_1 \geq \ldots \geq \lambda_m$ the ordered eigenvalues of $\Sigma_{(X,Y)}$, $\lambda_{X,1} \geq \ldots \geq\lambda_{X,m_1}$ and $\lambda_{Y,1} \geq \ldots \geq \lambda_{Y,m_2}$ the ordered eigenvalues of $\Sigma_X,\Sigma_Y$ respectively and $\kappa_1 \geq \ldots \geq \kappa_m$ the ordered eigenvalues of $\Sigma_0^{1/2} \Sigma_{(X,Y)} \Sigma_0^{1/2}$, being $\Sigma_0$ the covariance matrix of the independent coupling.
Their index $I_\text{G}$, always in $[0,1]$, is
\begin{equation*}
I_\text{G}(X,Y) = \frac{\sum_{j=1}^m \lambda_j - \sum_{j=1}^m \sqrt{\kappa_j} }{\sum_{j=1}^m \lambda_j - \sum_{j=1}^{\max(m_1,m_2)} \sqrt{\lambda_{X,j}^2+\lambda_{Y,j}^2}}.    
\end{equation*}
It characterizes independence for Gaussian vectors, and it is possible to identify the cases where it 
is equal to $1$, which determines a notion of maximal dependency between Gaussian vectors.
If $(X,Y)$ is not a Gaussian vector, \cite{mordant2022measuring} propose to compute the index $I_\text{G}(\tilde{X},\tilde{Y})$ with $(\tilde{X}, \tilde{Y})$ the Gaussian vector having the same covariance matrix as $(X,Y)$. However, if $I_\text{G}(\tilde{X},\tilde{Y}) = 0$, it means that $\tilde{X}$ and $\tilde{Y}$ are independent, but not necessarily that $X$ and $Y$ are independent. This is not a surprise: by using only moments of order two of $(X,Y)$, it is not possible to characterize independence.

\subsection{The simplest bivariate random vector}
\label{sec:bivariate}

To illustrate some 
topics of this section,
take $(X,Y)$ Gaussian random vector in $\R^2$ with covariance matrix
\renewcommand*{\arraystretch}{1.2}
\[ \Sigma_{(X,Y)} = \begin{pmatrix}
1 & \rho \\
\rho & 1
\end{pmatrix}.  \]
In this case $\rho \in [-1,1]$ coincides with the linear correlation of $X$ and $Y$. Using e.g.~\eqref{eq:I_cond_cdf} we have, for $p=2$,  
\begin{equation*}
I_{|\bullet}(X,Y;p=2) = 1 - \sqrt{1-\rho^2},
\end{equation*}
\citet[Lemma 3.7]{wiesel2022measuring}. 
The index $I_\text{G}(X,Y)$ has been computed in \citet[Example 3]{mordant2022measuring}:
\begin{equation*}
I_\text{G}(X,Y) =\frac{2 - \sqrt{1+\rho} - \sqrt{1-\rho}}{2- \sqrt{2}}.
\end{equation*}
Regarding $D_\otimes(X,Y)$, when the ground distance $d$ is not Euclidean, it is difficult to compute it explicitly. For $d((x,y), (x',y')) = (|x-x'|+|y-y'|)$, \citet[Theorem 3.5]{mori2020earth} obtain upper and lower bounds 
\begin{equation*}
|1- \sqrt{1-\rho}| \leq I_{\otimes}(X,Y;p=1) \leq \sqrt{ 1 - \sqrt{1-\rho^2}}.
\end{equation*}

The graphs of these expressions are reported in Figure~\ref{fig:bivariate}. Note that all the indices detect an increasing correlation with $|\rho|$, which is equal to $1$ if and only if $\rho = \pm 1$. 

\begin{figure}
\centering
\begin{tikzpicture}
\begin{axis}[
xmin = -1, xmax = 1.,
ymin = 0., ymax = 1.,
xlabel={$\rho$},
ylabel={$I$},
legend style={at={(0.1,0.99)},anchor=north west}
]
\addplot[line width = 1pt,color=red,samples=100,domain=-1:1]{1-sqrt(1-x^2)};
\addlegendentry{{\small $I_{|\bullet}$},  p=2};
\addplot[line width = 1pt,color=blue,samples=100,domain=-1:1]{( 2 - sqrt(1+x) - sqrt(1-x) )/(2 - sqrt(2)) };
\addlegendentry{{\small $I_G$},  p=2};
\addplot[line width = 1pt,color=black!60!green,samples=100,style = dotted,domain=-1:1]{abs(1 - sqrt(1-x))};
\addlegendentry{{\small l.b. $I_{\otimes}$, p=1}};
\addplot[line width = 1pt,color=black!60!green,samples=100,style = dashed,domain=-1:1]{sqrt(1-sqrt(1-x^2))};
\addlegendentry{{\small u.b. $I_{\otimes}$, p=1}};
\end{axis}
\end{tikzpicture}
\caption{Values of the different indices (lower and upper bound in the case of $I_\otimes$) for a bivariate Gaussian with correlation coefficient $\rho$, see Section~\ref{sec:bivariate}.}
\label{fig:bivariate}
\end{figure}
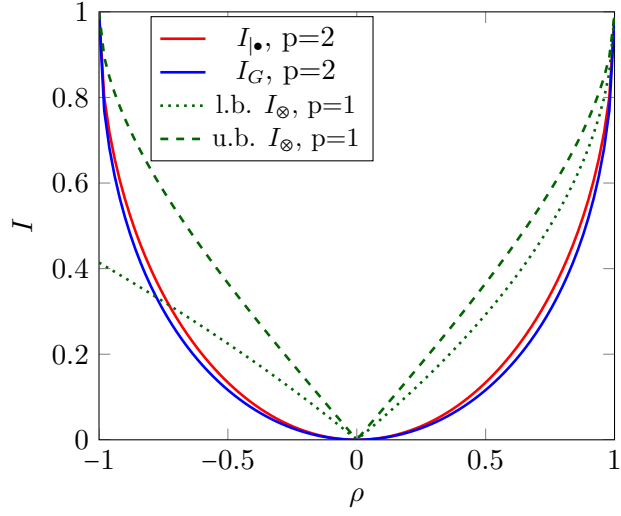

\section{Invariances}
\label{sec:invariances}

From the way they are built, these indices enjoy certain invariances, in the sense that $I(X,Y)$ may coincide with $I(X',Y')$ when $X' = f(X)$ and $Y'= g(Y)$ for functions $f,g,$ belonging to certain classes.
We call a function $f : \mathbb{X} \to \mathbb{X}$,
\begin{itemize}
\item a measurable bijection if $f$ is bijective and both $f$ and $f^{-1}$ are measurable; 
\item an isometry if $f$ is bijective and $d_\mathbb{X}(f(x),f(x')) = d_\mathbb{X}(x,x')$ for all $x,x'$; 
\item a similarity transformation if, for some $\lambda > 0$, $f$ is bijective and $d_\mathbb{X}(f(x),f(x')) = \lambda d_\mathbb{X}(x,x')$ for all $x,x'$, equivalently if $f$ is an isometry from $(\mathbb{X},\lambda d_\mathbb{X})$ to $(\mathbb{X},d_\mathbb{X})$; in this case we call $\lambda$ the scaling factor;
\item a bijective increasing transformation 
if $\X$ is an interval of $\R$, and $f$ is bijective and non-decreasing.
\end{itemize}
If $\mathbb{X} = \mathbb{Y} = \R^m$ endowed with the Euclidean distance, then $f$ is an isometry if and only if $f(x) = Ax + b$ for an orthogonal matrix $A$ and a vector $b \in \R^m$, and $f$ is a similarity if and only if $f(x) = \lambda Ax + b$, again with $A$ orthogonal matrix and $b \in \R^m$, being $\lambda$ the scaling factor.

\begin{definition}
Let $\mathcal{F}$ be a subset of pairs $(f,g)$ of measurable bijections of $\mathbb{X}$ and $\mathbb{Y}$. We say that $I$ is $\mathcal{F}$-invariant if, for all $(f,g) \in \mathcal{F}$, and all pairs $(X,Y)$ of random variables, 
\begin{equation*}
I(f(X),g(Y)) = I(X,Y).
\end{equation*}
\end{definition}

\noindent Note that for simplicity we restrict to invertible transformations, and from the definition we can see that the invariance $I(f(X),g(Y)) = I(X,Y)$ extends to the subgroup of pairs of measurable bijections generated by $\mathcal{F}$. In particular, without loss of generality, we can assume $(f^{-1},g^{-1}) \in \mathcal{F}$ if $(f,g) \in \mathcal{F}$.

The group of isometries is important as it leaves the Wasserstein distance invariant: if $X,X'$ are random variables valued in $\mathbb{X}$, then $\W_{d_\X,p}(\mathscr{L}(f(X)),\mathscr{L}(f(X'))) = \W_{d_\X,p}(\mathscr{L}(X), \mathscr{L}(X'))$ for any isometry $f$ of $(\mathbb{X},d_\mathbb{X})$. Armed with this observation, it is easy to prove the following. Variants of the  
result below has already been proved: see Theorem 3.8 in \cite{mori2020earth}, Proposition 6.2 in 
\cite{nies2025transport}, Proposition 1 in \cite{mordant2022measuring} for the joint index; and Lemma 3.2 in \cite{wiesel2022measuring}, Theorem 2 in \cite{borgonovo2024global} for the conditional index.

\begin{proposition}[Invariances]~~
\begin{enumerate}
\item If $f$ and $g$ are similarities of $(\mathbb{X},d_\mathbb{X})$ and $(\mathbb{Y},d_\mathbb{Y})$ respectively with the same scaling factor, and if $\X \times \Y$ is equipped with a $\ell_q$ norm as in Remark~\ref{rem:symmetry}, 
\begin{equation*}
I_\otimes(f(X),g(Y)) = I_\otimes(X,Y).
\end{equation*}
\item If $f$ is a measurable bijection of $\mathbb{X}$ and $g$ is a similarity of $(\mathbb{Y},d_\mathbb{Y})$
\begin{equation*}
I_{|\bullet}(f(X),g(Y)) = I_{|\bullet}(X,Y).
\end{equation*}
\end{enumerate}
\end{proposition}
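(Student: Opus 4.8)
The plan is to leverage the single structural fact highlighted just before the statement: isometries preserve Wasserstein distances. Since a similarity with scaling factor $\lambda$ is an isometry from $(\mathbb{X},\lambda d_\mathbb{X})$ to $(\mathbb{X},d_\mathbb{X})$, composing with such a map rescales every distance by $\lambda$, hence rescales $\W_{d_\mathbb{X},p}$ by $\lambda$ as well (and likewise for $\mathbb{Y}$ with the same $\lambda$). The key point is that both $D$ and $U$ are built entirely out of Wasserstein distances of the relevant marginal/joint/conditional laws, so a common rescaling of numerator and denominator leaves the ratio $I = D/U$ untouched.

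\textbf{Part (1).} First I would record that $\mathscr{L}(f(X),g(Y))$ is the pushforward of $\mathscr{L}(X,Y)$ by $(f,g)$, and that $\mathscr{L}(f(X))\otimes\mathscr{L}(g(Y))$ is the pushforward of $\mathscr{L}(X)\otimes\mathscr{L}(Y)$ by the same map. Next, since $f,g$ are similarities with common scaling factor $\lambda$ and $\X\times\Y$ carries an $\ell_q$ norm, the product map $(f,g)$ satisfies $d_{\X\times\Y}((f(x),g(y)),(f(x'),g(y'))) = (\lambda^q d_\X(x,x')^q + \lambda^q d_\Y(y,y')^q)^{1/q} = \lambda\, d_{\X\times\Y}((x,y),(x',y'))$, i.e. $(f,g)$ is a similarity of the product space with the same factor $\lambda$. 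Pushing couplings forward along $(f,g)$ is a bijection $\Gamma(\mathscr{L}(X,Y),\mathscr{L}(X)\otimes\mathscr{L}(Y)) \to \Gamma(\mathscr{L}(f(X),g(Y)),\mathscr{L}(f(X))\otimes\mathscr{L}(g(Y)))$ that scales every transport cost by $\lambda^p$, so $D_\otimes(f(X),g(Y)) = \lambda\, D_\otimes(X,Y)$. The same scaling argument applied to the one-dimensional-marginal Wasserstein distances gives $\E(d_\X(f(X),f(X'))^p)^{1/p} = \lambda\,\E(d_\X(X,X')^p)^{1/p}$ and similarly for $Y$, so $U_\otimes(f(X),g(Y)) = \lambda\, U_\otimes(X,Y)$ (the $\min$ commutes with the common positive factor $\lambda^p$). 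Taking the ratio, the $\lambda$'s cancel and $I_\otimes(f(X),g(Y)) = I_\otimes(X,Y)$.

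\textbf{Part (2).} For $D_{|\bullet}$ I would first check that $f$ being a measurable bijection of $\mathbb{X}$ means $\mathscr{L}(Y\mid X=x) = \mathscr{L}(Y\mid f(X)=f(x))$, so that pushing the random conditional law $\mathscr{L}(Y\mid X)$ forward by $g$ yields exactly the conditional law $\mathscr{L}(g(Y)\mid f(X))$, and the outer expectation over $f(X)$ matches the outer expectation over $X$ (a change of variables that is just relabeling, since $f$ is a bijection — no Jacobian, as we integrate a function of the law against the law of $X$). Then, with $g$ a similarity of factor $\lambda$, $\W_{d_\Y,p}(\mathscr{L}(g(Y)\mid f(X)), \mathscr{L}(g(Y))) = \lambda\, \W_{d_\Y,p}(\mathscr{L}(Y\mid X), \mathscr{L}(Y))$ pointwise in the conditioning variable, so $D_{|\bullet}(f(X),g(Y)) = \lambda\, D_{|\bullet}(X,Y)$; the same gives $U_{|\bullet}(f(X),g(Y)) = \lambda\, U_{|\bullet}(X,Y)$, and the ratio is invariant. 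Note here $f$ need only be a measurable bijection because the ground metric on $\X$ never enters $D_{|\bullet}$.

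\textbf{Main obstacle.} The only genuinely delicate point is the measure-theoretic bookkeeping for the conditional law in Part (2) — justifying that $g_\#\mathscr{L}(Y\mid X=\cdot)$ is a regular conditional distribution for $g(Y)$ given $f(X)$, and that the $\mathscr{L}(X)$-a.s. statement transports correctly under the bijection $f$. On Polish spaces regular conditional distributions exist and pushforward along a measurable map of the conditioned variable is standard, so this is routine but deserves a sentence; everything else is the mechanical scaling-cancellation argument sketched above, which is why the proposition is stated with "it is easy to prove the following."
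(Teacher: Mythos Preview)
Your proposal is correct and follows exactly the route the paper itself indicates: the paper does not spell out a proof but only says ``Armed with this observation [that isometries leave the Wasserstein distance invariant], it is easy to prove the following,'' and your argument is precisely the scaling-cancellation that this remark invites. The only thing one might add is that the paper relies on prior references for the detailed verifications, so your explicit handling of the conditional-law bookkeeping in Part~(2) actually goes slightly beyond what the paper records.
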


In other terms, the group of invariance $\mathcal{F}$ for the joint index contains all pairs of similarities with common scaling factors
while the one for the conditional index contains all pairs of measurable bijections and similarities 
In particular, the conditional index is asymmetric also in the type of invariance it induces. The class of invariance is related to the class of maximal dependence $\mathcal{I}_1$.

\begin{proposition}
\label{prop:invariance_optimal}
Let $\mathcal{I}_1$ be the set of $h : \mathbb{X} \to \mathbb{Y}$ such that $I(X,h(X)) = 1$ for any random variable $X$, and assume $I$ is $\mathcal{F}$-invariant. Then $g \circ h \circ f^{-1} \in \mathcal{I}_1$ for every $h \in \mathcal{I}_1$ and $(f,g) \in \mathcal{F}$.
\end{proposition}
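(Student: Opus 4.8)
The statement is essentially a transfer-of-structure argument: if $h$ realizes maximal dependence and $(f,g)$ is an invariance pair, then the conjugated map $g \circ h \circ f^{-1}$ also realizes maximal dependence. The plan is to unfold both definitions and chain them together. Fix a random variable $X$ valued in $\mathbb{X}$, and set $\tilde{X} = f(X)$. Since $f$ is a measurable bijection, $\tilde{X}$ is again a perfectly valid $\mathbb{X}$-valued random variable, so the hypothesis $h \in \mathcal{I}_1$ applies to it: $I(\tilde{X}, h(\tilde{X})) = 1$.

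The core computation then runs as follows. First I would write $I\big(X, (g \circ h \circ f^{-1})(X)\big)$ and observe that, because $f^{-1}(f(X)) = X$ almost surely, this equals $I\big(X, g(h(\tilde{X}))\big)$ where $\tilde{X} = f(X)$. Next, because $I$ is $\mathcal{F}$-invariant and $(f,g) \in \mathcal{F}$, we may apply the invariance to the pair $(X, g(h(\tilde X)))$: namely $I\big(X, g(h(\tilde X))\big) = I\big(f(X), g(g(h(\tilde X)))\big)$ — wait, this introduces a spurious $g$. The cleaner route is to apply invariance in the form $I\big(X, g(h(\tilde X))\big) = I\big(f(X), ?\big)$; since the invariance substitutes $X \mapsto f(X)$ on the first slot, the second slot must be expressed so that it stays put. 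The right way: write the target variable as $(g \circ h \circ f^{-1})(X) = g(h(\tilde X))$ with $\tilde X = f(X)$, and note that $(X, g(h(\tilde X)))$ is obtained from the pair $(\tilde X, h(\tilde X))$ by applying $(f^{-1}, \mathrm{id})$; but $\mathrm{id}$ need not be in the relevant class. Instead, apply $(f^{-1}, g)$ — we are told $\mathcal{F}$ is closed under inverses, and we can pair $f^{-1}$ with $g$ provided $(f^{-1}, g) \in \mathcal{F}$; since $(f,g) \in \mathcal F \Rightarrow (f^{-1}, g^{-1}) \in \mathcal F$, and since invariance extends to the generated subgroup (as noted right after the definition), the pair $(f^{-1}, g)$ lies in that subgroup, hence $I$ is invariant under it as well. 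Therefore $I\big(f^{-1}(\tilde X), g(h(\tilde X))\big) = I\big(\tilde X, h(\tilde X)\big) = 1$, and $f^{-1}(\tilde X) = X$, giving $I\big(X, (g \circ h \circ f^{-1})(X)\big) = 1$. Since $X$ was arbitrary, $g \circ h \circ f^{-1} \in \mathcal{I}_1$.

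\textbf{Main obstacle.} The only delicate point is the bookkeeping of which pairs of bijections the invariance applies to: the definition of $\mathcal{F}$-invariance is stated for pairs in $\mathcal{F}$, but the conjugation $g \circ h \circ f^{-1}$ requires invoking invariance under the pair $(f^{-1}, g)$ (or equivalently $(f, g^{-1})$), which may not literally be an element of $\mathcal F$. The remark immediately following the definition resolves this — invariance automatically extends to the subgroup of pairs of measurable bijections generated by $\mathcal{F}$, and $(f^{-1},g)$ lies in this subgroup since $(f,g)$ and $(f^{-1},g^{-1})$ do. I would state this extension explicitly as the first line of the proof. Everything else is the substitution $f^{-1} \circ f = \mathrm{id}$ applied pointwise to the random variable, which is routine since $f$ is a bijection.
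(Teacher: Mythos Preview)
Your argument has a bookkeeping slip that breaks the chain of equalities. You set $\tilde X = f(X)$ and then claim $(g\circ h\circ f^{-1})(X) = g(h(\tilde X))$; but $(g\circ h\circ f^{-1})(X) = g(h(f^{-1}(X)))$, whereas $g(h(\tilde X)) = g(h(f(X)))$. These agree only if $f = f^{-1}$. With your choice of $\tilde X$, the final chain actually yields $I(X,(g\circ h\circ f)(X))=1$, not the desired $I(X,(g\circ h\circ f^{-1})(X))=1$.

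The fix is to take $\tilde X = f^{-1}(X)$ instead. Then $(g\circ h\circ f^{-1})(X) = g(h(\tilde X))$ genuinely holds, $X = f(\tilde X)$, and since $h\in\mathcal I_1$ gives $I(\tilde X, h(\tilde X))=1$, applying invariance under the pair $(f,g)\in\mathcal F$ directly yields
\[
I\big(X,(g\circ h\circ f^{-1})(X)\big)=I\big(f(\tilde X),g(h(\tilde X))\big)=I\big(\tilde X,h(\tilde X)\big)=1.
\]
This is exactly the paper's proof (with $X'=\tilde X$, $Y'=h(\tilde X)$). Note in particular that your ``main obstacle'' --- needing invariance under $(f^{-1},g)$ and appealing to the generated subgroup --- is an artifact of the wrong substitution: with $\tilde X = f^{-1}(X)$ one only ever invokes $(f,g)\in\mathcal F$, so no subgroup argument is required.
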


\begin{proof}
Assume $Y = g(h(f^{-1}(X)))$ a.s.. Then with $X' = f^{-1}(X)$ and $Y' = g^{-1}(Y)$ we have $Y' = h(X')$ a.s.. Thus $1=I(X',Y') = I(f(X'),g(Y')) = I(X,Y)$. 
\end{proof}

\begin{remark}
As discussed in \citet[Remark 2]{mori2019four}, ``More invariance is not necessarily better'', since Proposition~\ref{prop:invariance_optimal} shows that it makes $\mathcal{I}_1$ larger, with potentially too many couplings labelled as maximally dependent. Moreover, invariance also has important consequences on the continuity of the index, as it can prevent the index to be continuous with respect to the convergence in distribution of $(X,Y)$; see \citet[Theorem 1]{mori2019four} and the discussion in Section~\ref{sec:continuity_conditional} below.
\end{remark}

\begin{remark}
Assuming that $\mathbb{X} = \mathbb{Y}$ and $I(X,X) = 1$ for all random variables (a quite mild assumption), we see that $g \circ f^{-1}$ belongs to the set $\mathcal{I}_1$ of maximal dependent functions
for all $(f,g) \in \mathcal{F}$. Moreover, if $\mathbb{X}$ is a linear space and $\mathcal{F}$ contains at least all pairs of isometries, then $I(X,-X) = 1$. It illustrates the difficulty to build a signed index of dependence if we have such class of invariance,
since $(X,-X)$ is arguably a natural notion of negative dependence.
\end{remark}

In the case $\X = \Y = \R$, it is also natural to require $I$ to be  
invariant by increasing transformations, that is, $I(f(X),g(Y)) = I(X,Y)$ for every $f,g$ bijective increasing transformations. 
This can be achieved for absolutely continuous random variables by defining the index at the level of the copulas: for an index $I$
we define 
\[ \bar{I}(X,Y) = I(F_X(X),F_Y(Y)), \]
where $F_X$, $F_Y$ are the cumulative distribution functions of $X,Y$ respectively. If $X$ and $Y$ are absolutely continuous, marginally $F_X(X)$ and $F_Y(Y)$ are 
uniformly distributed on $[0,1]$, and the joint distribution of $(F_X(X),F_Y(Y))$ is called a copula. By construction $\bar{I}(X,Y) = \bar{I}(f(X),g(Y))$ for every $f,g$ bijective increasing transformations, 
as $(f(X),g(Y))$ and $(X,Y)$ share the same copula. 
For example, \cite{Marti2017, BeneventoDurante2024} and \cite{DeKeyserGijbels2025} define Wasserstein-based indices of dependence directly at the level of the copulas. The latter also studies copulas 
of more than two random variables, as discussed in Section~\ref{sec:more_than_2} . 

In the case $\X = \Y = \R^m$, one possible generalization of copulas relies on optimal transport and 
showcases yet another use of optimal transport in the context of measuring dependence. Taking $P$ the uniform distribution over 
the unit cube $[0,1]^m$ of $\R^m$, we denote by $T_X$, $T_Y$ the optimal transport maps from respectively $\mathscr{L}(X)$ and $\mathscr{L}(Y)$ to $P$. These transport maps play the role of ``multivariate ranks'', analogue to quantile functions \citep{chernozhukov2017monge,ghosal2022multivariate}. Indeed, marginally $T_X(X)$ and $T_Y(Y)$ are 
uniformly distributed on $[0,1]^m$, and if $m=1$, we recover the 
copula formulation as $F_X = T_X$ and $F_Y = T_Y$. For an index $I$, we can define 
\[ \bar{I}(X,Y) = I(T_X(X),T_Y(Y)), \]
which has been the proposal of \citet{deb2024distribution}, being $I$ an kernel-based index proposed by the same authors \citep{deb2020measuring}. In this multi-dimensional setting, often the interest is not to guarantee some invariances of $\bar{I}$, but rather to ensure that 
when estimating $\bar{I}$ via a plug-in empirical estimator, 
the distribution of the estimator does not depend on $\mathscr{L}(X)$ and $\mathscr{L}(Y)$ if $(X,Y)$ are independent.

\section{Robustness}
\label{sec:robustness}
By robustness, we mean understanding how the index changes as we change its inputs, that is, the random variables $(X,Y)$. Note that invariances, investigated in the previous sections, can already be seen as a form of robustness, as they guarantee that the index does not change if some transformation is applied to $(X,Y)$. We emphasize that both indices depend only on the law of $(X,Y)$ as an input. 
Thus in this section we will sometimes write $I(P)$ for $P = \mathscr{L}(X,Y)$ instead of $I(X,Y)$ for clarity.

\subsection{Continuity of the joint index of dependence in weak topology}

Recall that a sequence $(X_n,Y_n)_{n \geq 1}$ converges in distribution to $(X,Y)$ if $\mathscr{L}(X_n,Y_n)$ converges weakly to $\mathscr{L}(X,Y)$ as $n \to + \infty$, or equivalently if $\E(f(X_n,Y_n))$ converges to $\E(f(X,Y))$ for every continuous bounded function $f : \mathbb{X} \times \mathbb{Y} \to \mathbb{R}$. If this holds and if $\mathbb{X}$, $\mathbb{Y}$ are Euclidean spaces, we say 
there is convergence of $p$-moments if $\E(\| X_n \|^p+ \| Y_n \|^p)$ converges to $\E(\| X \|^p+\| Y \|^p)$.
These notions can be easily generalized to arbitrary metric spaces  $\mathbb{X}$, $\mathbb{Y}$ by requiring $\E(d_\mathbb{X}(X_n,x_0)^p+d_\mathbb{Y}(Y_n,y_0)^p)$ to converge to $\E(d_\mathbb{X}(X,x_0)^p+d_\mathbb{Y}(Y,y_0)^p)$, where $x_0,y_0$ are given points in $\mathbb{X}$, $\mathbb{Y}$. It is not difficult to show that the convergence in distribution together with convergence of $p$-moments does not depend on the reference points $x_0$, $y_0$, and together they play an important role for the Wasserstein distance, since it can be shown to metrize this joint notion of convergence. We refer e.g. to \citet[Chapter 6]{villani2009optimal} for a thorough discussion on this topic. 

In this section we explain why
convergence in the Wasserstein distance implies convergence of the joint index of dependence. The upper bounds $U_{\otimes}(X,Y)$ and $U_{|\bullet}(X,Y)$ depend on $\E(d_\mathbb{X}(X,X')^p)$ and $\E(d_\mathbb{Y}(Y,Y')^p)$ the generalized mean discrepancies. Adapting ideas of the proof of \citet[Theorem 4.1]{wiesel2022measuring}, they are easily shown to be 
Lipschitz, and a fortiori continuous, with respect to $\W$.

\begin{lemma}
\label{lm:continuity_GMD}
If $X_1, X_2$ are random variables valued in 
a metric space $(\X, d_{\mathbb{X}})$, and $X_1', X_2'$ are independent copies of $X_1, X_2$ respectively,
\begin{equation*}
\left| \left(\E(d_\mathbb{X}(X_1,X_1')^p) \right)^{1/p} - \left(\E(d_\mathbb{X}(X_2,X_2')^p) \right)^{1/p} \right| 
\leq 2 \, \W_{d_\mathbb{X},p}(\mathscr{L}(X_1),\mathscr{L}(X_2)).
\end{equation*}
\end{lemma}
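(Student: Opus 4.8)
The plan is to bound the difference of the two generalized mean discrepancies by constructing an explicit (sub-optimal) coupling between $\mathscr{L}(X_1,X_1')$ and $\mathscr{L}(X_2,X_2')$ out of an optimal coupling for $\W_{d_\mathbb{X},p}(\mathscr{L}(X_1),\mathscr{L}(X_2))$, and then applying the triangle inequality for the $L^p$ norm (Minkowski) to the random variable $d_\mathbb{X}(\cdot,\cdot)$.

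First I would let $\pi$ be an optimal coupling of $\mathscr{L}(X_1)$ and $\mathscr{L}(X_2)$, so $(Z_1,Z_2)\sim\pi$ with $Z_1\eqd X_1$, $Z_2\eqd X_2$ and $\E(d_\mathbb{X}(Z_1,Z_2)^p)=\W_{d_\mathbb{X},p}(\mathscr{L}(X_1),\mathscr{L}(X_2))^p$. Then I would take an independent copy $(Z_1',Z_2')$ of $(Z_1,Z_2)$. The pair $(Z_1,Z_1')$ has law $\mathscr{L}(X_1)\otimes\mathscr{L}(X_1')=\mathscr{L}(X_1,X_1')$ and similarly $(Z_2,Z_2')$ has law $\mathscr{L}(X_2,X_2')$. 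Viewing $d_\mathbb{X}(Z_1,Z_1')$ and $d_\mathbb{X}(Z_2,Z_2')$ as elements of $L^p(\Omega)$, the reverse triangle inequality in $L^p$ gives
\begin{equation*}
\left| \left(\E(d_\mathbb{X}(Z_1,Z_1')^p)\right)^{1/p} - \left(\E(d_\mathbb{X}(Z_2,Z_2')^p)\right)^{1/p} \right| \leq \left( \E\left( |d_\mathbb{X}(Z_1,Z_1') - d_\mathbb{X}(Z_2,Z_2')|^p \right) \right)^{1/p}.
\end{equation*}
By the triangle inequality for the metric $d_\mathbb{X}$, pointwise $|d_\mathbb{X}(Z_1,Z_1') - d_\mathbb{X}(Z_2,Z_2')| \leq d_\mathbb{X}(Z_1,Z_2) + d_\mathbb{X}(Z_1',Z_2')$, so Minkowski again yields the bound $(\E(d_\mathbb{X}(Z_1,Z_2)^p))^{1/p} + (\E(d_\mathbb{X}(Z_1',Z_2')^p))^{1/p}$. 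Since both terms equal $\W_{d_\mathbb{X},p}(\mathscr{L}(X_1),\mathscr{L}(X_2))$ by the choice of $\pi$ and the fact that $(Z_1',Z_2')$ is an identical copy, this gives the factor $2$ in the claimed inequality. Finally, because $(Z_1,Z_1')\eqd(X_1,X_1')$ and $(Z_2,Z_2')\eqd(X_2,X_2')$, the left-hand side equals exactly the quantity in the lemma, completing the argument.

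There is no real obstacle here; the only point requiring a little care is the bookkeeping of which coupling makes which pair have the right law — in particular that taking an \emph{independent} copy of the optimal coupling simultaneously realizes the correct joint law $\mathscr{L}(X_i,X_i')$ (product of $\mathscr{L}(X_i)$ with itself, since $X_i'$ is an independent copy) on both coordinates while keeping the cross-distance controlled by $\W$. One should also note that if $\W_{d_\mathbb{X},p}(\mathscr{L}(X_1),\mathscr{L}(X_2))=+\infty$ the statement is vacuous, so we may assume finiteness and hence that the relevant $p$-th moments are finite, which is what justifies the $L^p$ manipulations above.
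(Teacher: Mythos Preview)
Your proof is correct and is essentially the same as the paper's: both take an optimal coupling $(X_1,X_2)$, an independent copy $(X_1',X_2')$, and apply the triangle inequality. The only cosmetic difference is that the paper phrases the key step as the quadrilateral inequality for the metric $d_{L^p(\mathbb{X})}(Z,Z')=\E(d_\mathbb{X}(Z,Z')^p)^{1/p}$ on $\mathbb{X}$-valued random variables, whereas you unpack this into the reverse triangle inequality in $L^p(\Omega)$, the pointwise triangle inequality for $d_\mathbb{X}$, and Minkowski.
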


\begin{proof}
Without loss of generality we can assume $(X_1,X_2)$ with law given by the optimal transport coupling between $\mathcal{L}(X_1)$ and $\mathcal{L}(X_2)$, and $(X_1',X_2')$ to be an independent copy of $(X_1,X_2)$. 
Denote by $d_{L^p(\mathbb{X})}$ the $L^p$ distance on $\X$-valued random variables. The triangle inequality yields
\begin{equation*}
\left| d_{L^p(\mathbb{X})}(X_1,X_1') - d_{L^p(\mathbb{X})}(X_2,X_2') \right| 
\leq d_{L^p(\mathbb{X})}(X_1,X_2) + d_{L^p(\mathbb{X})}(X'_1,X'_2),
\end{equation*}
which is the claim as $d_{L^p(\mathbb{X})}(X_1,X_2) = d_{L^p(\mathbb{X})}(X'_1,X'_2) = \W_{d,p}(\mathscr{L}(X_1),\mathscr{L}(X_2))$. 
\end{proof}

This yields the following result.

\begin{proposition}
\label{prop:continuity_d_joint}
Assume $(X_n,Y_n)_{n \geq 1}$ converges to $(X,Y)$ in distribution together with convergence of the $p$-moments,
i.e. $\mathcal{W}_p(\mathscr{L}(X_n,Y_n),\mathscr{L}(X,Y))$ converges to $0$. Then, 
\begin{equation*}
\lim_{n \to + \infty} D_\otimes(X_n,Y_n) = D_\otimes(X,Y). 
\end{equation*}
If in addition $U_\otimes(X,Y) > 0$,
\begin{equation*}
\lim_{n \to + \infty} I_\otimes(X_n,Y_n) = I_\otimes(X,Y).
\end{equation*}
\end{proposition}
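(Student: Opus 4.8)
The plan is to prove convergence of $D_\otimes$ first, then deduce convergence of $I_\otimes$ by combining this with Lemma~\ref{lm:continuity_GMD}. For the first part, the key observation is that $D_\otimes(X,Y) = \W(\mathscr{L}(X,Y), \mathscr{L}(X)\otimes\mathscr{L}(Y))$, and that both arguments of this Wasserstein distance depend continuously on $\mathscr{L}(X_n,Y_n)$ in the appropriate topology. By the triangle inequality for $\W$,
\begin{equation*}
\left| D_\otimes(X_n,Y_n) - D_\otimes(X,Y) \right| \leq \W\big(\mathscr{L}(X_n,Y_n), \mathscr{L}(X,Y)\big) + \W\big(\mathscr{L}(X_n)\otimes\mathscr{L}(Y_n), \mathscr{L}(X)\otimes\mathscr{L}(Y)\big).
\end{equation*}
The first term goes to $0$ by hypothesis. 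For the second term, I would use the fact that an independent coupling of the marginals gives an admissible (not necessarily optimal) coupling of the products: if $\gamma_X$ is an optimal coupling of $\mathscr{L}(X_n)$ and $\mathscr{L}(X)$, and $\gamma_Y$ one of $\mathscr{L}(Y_n)$ and $\mathscr{L}(Y)$, then $\gamma_X\otimes\gamma_Y$ couples $\mathscr{L}(X_n)\otimes\mathscr{L}(Y_n)$ with $\mathscr{L}(X)\otimes\mathscr{L}(Y)$, and with the $\ell_q$ ground metric on the product one bounds the transport cost of this coupling in terms of $\W_{d_\X,p}(\mathscr{L}(X_n),\mathscr{L}(X))$ and $\W_{d_\Y,p}(\mathscr{L}(Y_n),\mathscr{L}(Y))$. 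Both of these in turn are controlled by $\W_p(\mathscr{L}(X_n,Y_n),\mathscr{L}(X,Y))$, since projecting a coupling of the joints onto one coordinate gives a coupling of the marginals. Hence the second term also vanishes, giving $D_\otimes(X_n,Y_n)\to D_\otimes(X,Y)$.

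For the index, write $I_\otimes(X_n,Y_n) = D_\otimes(X_n,Y_n)/U_\otimes(X_n,Y_n)$ where $U_\otimes(X,Y) = \min\{(\E d_\X(X,X')^p)^{1/p}, (\E d_\Y(Y,Y')^p)^{1/p}\}$. By Lemma~\ref{lm:continuity_GMD} applied along each coordinate (using again that $\W_{d_\X,p}(\mathscr{L}(X_n),\mathscr{L}(X))$ and $\W_{d_\Y,p}(\mathscr{L}(Y_n),\mathscr{L}(Y))$ are both dominated by $\W_p(\mathscr{L}(X_n,Y_n),\mathscr{L}(X,Y))\to 0$), each generalized mean discrepancy converges, and hence so does their minimum, i.e. $U_\otimes(X_n,Y_n)\to U_\otimes(X,Y)$. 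Since by assumption $U_\otimes(X,Y)>0$, for $n$ large $U_\otimes(X_n,Y_n)$ is bounded away from $0$, and the quotient of convergent sequences converges to the quotient of the limits, yielding $I_\otimes(X_n,Y_n)\to I_\otimes(X,Y)$.

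I do not expect any single step to be a serious obstacle; this is a continuity argument assembled from standard facts. The one point requiring a little care is the bound on $\W(\mathscr{L}(X_n)\otimes\mathscr{L}(Y_n),\mathscr{L}(X)\otimes\mathscr{L}(Y))$ by the marginal Wasserstein distances, which depends on the specific structure of the $\ell_q$ ground metric on $\X\times\Y$ (Remark~\ref{rem:symmetry}) — one needs the inequality $(a^q+b^q)^{1/q}\leq$ (something controlled by $a$ and $b$ separately in $L^p$), which for $p=q$ is immediate by Fubini, and for general $p,q$ follows from elementary norm comparisons. The only genuine assumption being used beyond the hypotheses is that $d$ is an $\ell_q$-type product metric, consistent with the setup in which $U_\otimes$ and $I_\otimes$ were defined; if one allows a general product metric the tensorization bound may need an extra constant, but convergence to zero still holds.
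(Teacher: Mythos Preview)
Your proposal is correct and matches the paper's intended approach: the paper does not spell out a proof of this proposition, writing only ``This yields the following result'' after Lemma~\ref{lm:continuity_GMD}, so your argument---triangle inequality for $\W$, tensorization of couplings to control $\W(\mathscr{L}(X_n)\otimes\mathscr{L}(Y_n),\mathscr{L}(X)\otimes\mathscr{L}(Y))$, control of marginal Wasserstein distances by the joint one via projection, and Lemma~\ref{lm:continuity_GMD} for the denominator---is exactly the standard continuity argument the paper leaves implicit. Your remark on the $\ell_q$ product metric is well placed and consistent with the setup of Remark~\ref{rem:symmetry}; the paper likewise notes just after the proposition that a Lipschitz bound is available when $d=d_\X+d_\Y$.
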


Note that $U_\otimes(X,Y) > 0$ if both $X$ and $Y$ are not deterministic. 
When $d = d_{\mathbb{X}} + d_{\mathbb{Y}}$, we can even obtain a Lipschitz continuity result of $D_\otimes(X,Y)$ as a function of $\mathscr{L}(X,Y)$ endowed with the Wasserstein distance \citep[Theorem 4.8]{nies2025transport}.

An important consequence,  
which we analyze more in 
detail in the next section, concerns the approximation with i.i.d. samples. Indeed, fix a law $\mathscr{L}(X,Y)$ and consider a sequence $(X_n,Y_n) \simiid \mathscr{L}(X,Y)$. Let us form the empirical distribution 
\begin{equation}
\label{eq:empirical_distribution}
\hat P_{\mathscr{L}(X,Y)}= \frac{1}{n} \sum_{i=1}^n \delta_{(X_i,Y_i)}.
\end{equation}
Then by Proposition~\ref{prop:continuity_d_joint} and the Glivenko-Cantelli theorem, 
$I_\otimes(\hat P_{\mathscr{L}(X,Y)})$ converges a.s. to $I(X,Y)$ as $n \to + \infty$, at least provided the $p$-th moment of $X$ and $Y$ are finite.

\subsection{Contamination with mixture}

A more specific setting to test the robustness of the index is the contamination model. Fix $\mathscr{L}(X,Y)$, $0\le \varepsilon \le 1$, and a contamination distribution $\mathscr{L}(\tilde{X},\tilde{Y}) \in \Gamma(X,Y)$ that has the same marginal distributions as $\mathscr{L}(X,Y)$. 
Define $(X_\varepsilon,Y_\varepsilon)$ a pair of random variables with law
\begin{equation*}
(1-\varepsilon) \mathscr{L}(X,Y) + \varepsilon \mathscr{L}(\tilde{X},\tilde{Y}).
\end{equation*}
If both $(X,Y)$ and $(\tilde{X},\tilde{Y})$ have finite $p$-moments then the continuity result of Proposition~\ref{prop:continuity_d_joint} yields that $I_\otimes(X_\varepsilon,Y_\varepsilon)$ converges to $I_\otimes(X,Y)$ as $\varepsilon \to 0$. We can say more: as noted in~\citet[Proposition 4.4]{nies2025transport}, by convexity of the Wasserstein distance, 
\begin{equation*}
D_\otimes(X_\varepsilon,Y_\varepsilon)^p \leq (1-\varepsilon)D_\otimes (X,Y)^p + \varepsilon D_\otimes(\tilde{X},\tilde{Y})^p.
\end{equation*}
In the particular case where $(\tilde{X},\tilde{Y})$ are independent, then $D_\otimes(X_\varepsilon,Y_\varepsilon) \leq (1-\varepsilon)D_\otimes (X,Y)$ and, as $U_\otimes$ is left unchanged, $I_\otimes(X_\varepsilon,Y_\varepsilon) \leq (1-\varepsilon)I_\otimes (X,Y)$: contaminating $(X,Y)$ 
with independent random variables decreases the index of dependence.

\subsection{Continuity of the conditional index in adapted topology}
\label{sec:continuity_conditional}

The conditional index of dependence requires the conditional distribution $\mathscr{L}(Y|X)$, that is, a disintegration of the measure $\mathscr{L}(X,Y)$, and it is much less robust. 

\begin{example}
\label{ex:iidcond}
Assume $(X,Y)$ is a pair of independent random variables with uniform marginal distributions
on $\mathbb{X} = \mathbb{Y} = [0,1]$. Then $I_{|\bullet}(X,Y) =  0$. Consider a sequence $(X_n,Y_n) \simiid \mathscr{L}(X,Y)$ and take $\hat P_{\mathscr{L}(X,Y)}$ the empirical distribution defined in~\eqref{eq:empirical_distribution}. Then
$I_{|\bullet}(\hat P_{\mathscr{L}(X,Y)}) = 1$ a.s. Indeed, 
as $(X_n)_{n \geq 1}$ are all distinct almost surely, we can always find a measurable function $f : [0,1] \to [0,1]$ such that $Y_i = f(X_i)$ for all $i$. Thus for this particular function $f$, we have $\hat Y=f(\hat X)$ a.s. if $(\hat X,\hat Y) \sim \hat P_{\mathscr{L}(X,Y)}$. Thus $I_{|\bullet}(\hat{X},\hat{Y}) = 1$.
\end{example}

The reason for the failure of continuity in the previous example is that $\hat P_{\mathscr{L}(X,Y)}$ approximates $\mathscr{L}(X,Y)$ well in the sense of weak convergence, whereas its conditional distribution is not a good approximation of $\mathscr{L}(Y|X)$ in the same topology. The same argument as in Example~\ref{ex:iidcond} 
shows that an index that is equal to $0$ for the independent case, and to $1$ if $Y$ is a measurable function of $X$, cannot be continuous for the topology of weak convergence.

The good notion of topology to capture this instability is the \emph{adapted topology}, which can be traced back to the works of \cite{aldous} and \cite{hellwig1996sequential}; we refer to \citet{backhoff2020all} and references therein for a thorough presentation and many equivalent definitions.
It is the coarsest topology on the set of laws of random variables which makes the law $(X,\mathscr{L}(Y|X))$ continuous as a function of the law of $(X,Y)$. It is finer than the topology of weak convergence, in the sense that if $\mathscr{L}(X_n,Y_n)$ converges in adapated topology to $\mathscr{L}(X,Y)$, then it also converges weakly, but the contrary is not true. 

This topology, together with the convergence of $p$-moments, is metrized by the so-called \emph{adapated Wasserstein distance} \citep[Theorem 1.3]{
backhoff2020all}. 
If $P, Q$ are two probability distributions on a product space $\mathbb{X} \times \mathbb{Y}$ with $(X,Y) \sim P$ and $(X',Y') \sim Q$, we write $P_1 = \mathscr{L}(X)$ and $Q_1 = \mathscr{L}(X')$ the first marginals of $P,Q$. The adapted Wasserstein distance of order $p$ is 
\begin{equation*}
\mathcal{AW}_{p}(P,Q)^p =  \inf_{\gamma \in \Gamma(P_1,Q_1)} \E_{(X,X') \sim \gamma} \left( d_\mathbb{X}(X,X')^p + \right.
\left.+ \W_{d_{\mathbb{Y}},p} \left( \mathscr{L}(Y|X), \mathscr{L}(Y'|X') \right)^p \right)   .
\end{equation*}
From the definition it follows that this distance requires choosing a coupling $\gamma$ of $(X,X')$, and, given $(X,X') = (x,x')$, computing a coupling of $\mathscr{L}(Y|X=x)$ and $\mathscr{L}(Y'|X=x')$. It takes explicitly in account the distance between conditional distributions.

\begin{proposition}[Theorem 4.1 in \cite{wiesel2022measuring}]
Fix $p=1$ and
assume that $\mathscr{L}(X_n,Y_n)_{n \geq 1}$ converges to $\mathscr{L}(X,Y)$, in adapated topology together with convergence of the first moment, 
meaning that $\mathcal{AW}_1(\mathscr{L}(X_n,Y_n),\mathscr{L}(X,Y))$ converges to $0$. Then 
\begin{equation*}
\lim_{n \to + \infty} D_{|\bullet}(X_n,Y_n) = D_{|\bullet}(X,Y).  
\end{equation*}
If in addition $U_{|\bullet}(X,Y) > 0$,
\begin{equation*}
\lim_{n \to + \infty} I_{|\bullet}(X_n,Y_n) = I_{|\bullet}(X,Y).
\end{equation*}
\end{proposition}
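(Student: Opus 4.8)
The strategy is to reduce the continuity of $D_{|\bullet}$ to the definition of the adapted Wasserstein distance, then transfer the result to $I_{|\bullet}$ via the continuity of the upper bound $U_{|\bullet}$. First I would recall that by definition $D_{|\bullet}(X,Y) = \E_X(\W_{d_\mathbb{Y},1}(\mathscr{L}(Y|X),\mathscr{L}(Y)))$, and observe that the marginal $\mathscr{L}(Y)$ is itself a mixture of the conditional laws $\mathscr{L}(Y|X=x)$. The key structural remark is that $D_{|\bullet}$ can be bounded above and below using the $\mathcal{AW}_1$ distance: given a coupling $\gamma$ of $\mathscr{L}(X_n)$ and $\mathscr{L}(X)$, one compares $\W_1(\mathscr{L}(Y_n|X_n=x_n),\mathscr{L}(Y_n))$ to $\W_1(\mathscr{L}(Y|X=x),\mathscr{L}(Y))$ through a chain of triangle inequalities, controlling $\W_1(\mathscr{L}(Y_n|X_n=x_n),\mathscr{L}(Y|X=x))$ by the integrand appearing in the definition of $\mathcal{AW}_1$, and controlling $\W_1(\mathscr{L}(Y_n),\mathscr{L}(Y))$ by $\mathcal{AW}_1 \geq \W_1$ on the second marginals (or directly by convexity of $\W_1$ applied to the mixtures). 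Taking expectations over the optimal $\gamma$ and then the infimum yields $|D_{|\bullet}(X_n,Y_n) - D_{|\bullet}(X,Y)| \leq 2\,\mathcal{AW}_1(\mathscr{L}(X_n,Y_n),\mathscr{L}(X,Y))$ or a similar Lipschitz-type bound, which gives the first limit.

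For the second statement, I would invoke Lemma~\ref{lm:continuity_GMD}: since $\mathcal{AW}_1$ dominates the ordinary Wasserstein distance $\W_1$ on $\mathbb{Y}$ (projecting onto the $Y$-coordinate), convergence in $\mathcal{AW}_1$ implies $\W_{d_\mathbb{Y},1}(\mathscr{L}(Y_n),\mathscr{L}(Y)) \to 0$, hence $U_{|\bullet}(X_n,Y_n) = (\E(d_\mathbb{Y}(Y_n,Y_n')))^{1/1} \to U_{|\bullet}(X,Y)$ by that lemma. Combined with $D_{|\bullet}(X_n,Y_n) \to D_{|\bullet}(X,Y)$ and the assumption $U_{|\bullet}(X,Y) > 0$, the quotient $I_{|\bullet}(X_n,Y_n) = D_{|\bullet}(X_n,Y_n)/U_{|\bullet}(X_n,Y_n)$ converges to $I_{|\bullet}(X,Y)$, since the denominators are eventually bounded away from $0$.

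The main obstacle I anticipate is the careful bookkeeping in the first step: the quantity $\mathscr{L}(Y_n)$ that appears as the reference measure is not fixed but varies with $n$, so one must simultaneously track the perturbation of the conditional laws and of their mixture. The clean way to handle this is to write, for a coupling $(X_n, X) \sim \gamma$,
\begin{align*}
\W_1(\mathscr{L}(Y_n|X_n), \mathscr{L}(Y_n))
&\leq \W_1(\mathscr{L}(Y_n|X_n), \mathscr{L}(Y|X)) + \W_1(\mathscr{L}(Y|X), \mathscr{L}(Y)) \\
&\quad + \W_1(\mathscr{L}(Y), \mathscr{L}(Y_n)),
\end{align*}
take $\E_\gamma$, use that $\E_\gamma \W_1(\mathscr{L}(Y_n|X_n),\mathscr{L}(Y|X)) \leq \mathcal{AW}_1(\cdots)$ for the optimal $\gamma$ (dropping the nonnegative $d_\mathbb{X}$ term), and bound the last term by $\W_1(\mathscr{L}(Y_n),\mathscr{L}(Y)) \leq \mathcal{AW}_1(\cdots)$ uniformly in $x$. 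This gives $D_{|\bullet}(X_n,Y_n) \leq D_{|\bullet}(X,Y) + 2\,\mathcal{AW}_1(\mathscr{L}(X_n,Y_n),\mathscr{L}(X,Y))$; the reverse inequality follows by symmetry (swapping the roles of $n$ and the limit), and together they yield the first claimed limit. One should also note that the finiteness of first moments, which is part of the $\mathcal{AW}_1$-convergence hypothesis, guarantees all the Wasserstein distances involved are finite, so the manipulations are justified.
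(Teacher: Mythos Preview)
Your argument is correct. Note that the paper does not include its own proof of this proposition: it is stated with attribution to \cite{wiesel2022measuring}, and the paper only adds the comment that Wiesel in fact proves $D_{|\bullet}$ is Lipschitz in the adapted Wasserstein distance. Your triangle-inequality chain, after taking expectation under the optimal $\gamma$ and using convexity of $\W_1$ to control $\W_1(\mathscr{L}(Y_n),\mathscr{L}(Y))$, yields exactly the Lipschitz bound $|D_{|\bullet}(X_n,Y_n)-D_{|\bullet}(X,Y)|\le 2\,\mathcal{AW}_1(\mathscr{L}(X_n,Y_n),\mathscr{L}(X,Y))$ that the paper attributes to Wiesel, so your approach is essentially the one alluded to. Your passage to $I_{|\bullet}$ via Lemma~\ref{lm:continuity_GMD} and the domination $\W_{d_\mathbb{Y},1}(\mathscr{L}(Y_n),\mathscr{L}(Y))\le \mathcal{AW}_1$ is also the natural route and is sound.
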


Actually, \cite{wiesel2022measuring} shows that that $D_{|\bullet}$ is Lipschitz as a function of $\mathscr{L}(X,Y)$
endowed with the adapted Wasserstein distance.

\section{Sample complexity}
\label{sec:sample_comp}

In applied settings, one typically does not have access to $\mathscr{L}(X,Y)$ but rather observes an i.i.d. sample $(X_1,Y_1),\dots, (X_n,Y_n) \simiid \mathscr{L}(X,Y)$. It becomes important to be able to approximate the indices $I$ through samples and for the approximation to converge fast as the sample size $n$ increases. For simplicity in this section we assume that $\mathbb{X} = \mathbb{Y} = [0,1]^m$ endowed with the Euclidean distance.

The denominators $U_{\otimes}$ and $U_{|\bullet}$ in the index depend on the generalized mean discrepancies $\E(d_\X(X,X')^p)$ and $\E(d_\Y(Y,Y')^p)$. They can easily be estimated in an unbiased way with $U$ statistics \citep[Chapter 12]{van2000asymptotic}, for instance $\E(d_\X(X,X')^p)$ is estimated by the expression $1/(n(n-1)) \sum_{i \neq j} d_\mathbb{X}(X_i,X_j)^p$. It explains why the focus is usually on the estimation of the numerators $D_{\otimes}$ and $D_{|\bullet}$. Nevertheless, a control of the dependence between the estimation of the numerator and the denominator, which cannot always be found in the literature, would be needed to deduce a rate for the estimation of $I$.

\subsection{The joint index of dependence}

The natural plug-in estimator for $I_{\otimes}$ requires a sample from $\mathscr{L}(X) \otimes \mathscr{L}(Y)$ as well. One possibility is to use 1/3 of the sample to estimate the joint distribution and the remaining 2/3 to estimate the product distribution. Under the simplifying assumption that $n$ is divisible by 3,
\begin{align*}
&\hat P_{{\mathscr{L}}(X,Y)} = \frac{3}{n} \sum_{i=1}^{n/3} \delta_{(X_i,Y_i)}; \quad \hat P_{\otimes} = \frac{3}{n} \sum_{i=1}^{n/3} \delta_{\big(X_{\frac{n}{3}+i},Y_{\frac{2n}{3}+i}\big)},
\end{align*}
where $\hat P_{\otimes} = \hat P_{\mathscr{L}(X) \otimes \mathscr{L}(Y)}$.
The plug-in estimator above has the advantage of using independent samples for the joint distribution and the independent coupling, thus reducing the analysis to the standard two-sample plug-in estimator of the Wasserstein distance. 
The speed of convergence in the Glivenko-Cantelli theorem in Wasserstein distance is well understood \citep{Fournier2015rate}, and in principle can be used to obtain convergence rates for the two-sample problem as in our case through triangular inequality. However, recent works have obtained faster rates leveraging smoothness of the cost, and, when applicable, separation of the measures \citep{chizat2020faster,ManoleWeed2024,Hundrieser2024}; we adapt them to our setting in the following theorem.

The first part can be found in \citet[Corollary 3]{ManoleWeed2024} for the case $2m \geq 5$, and in \citet[Sections 3.3 and 3.4]{Hundrieser2024} for the general case; 
the second part comes from the estimate $|a-b| \leq a^{1-p} |a^p - b^p|$ valid for any $a,b > 0$, as in \citet[Corollary 4]{ManoleWeed2024}.

\begin{theorem}
Let $(X_i,Y_i) \simiid \mathscr{L}(X,Y)$  on $[0,1]^{m} \times [0,1]^{m}$ for $i=1,\dots, n$. 
If $d$ the Euclidean distance on $\mathbb{R}^{2m}$ and $p\geq 1$, then there exist constants $C(m,p)>0$ such that
\begin{equation*}
\E(|D_{\otimes}(X,Y; d,p)^p -\mathcal{W}_{d,p}(\hat P_{{\mathscr{L}}(X,Y)}, \hat P_{ \otimes})^p|)  
\le  C(m,p) r(n),
\end{equation*}
where the rate $r(n) = r_{m,p}(n)$ is defined as 
\begin{equation*}
r(n) = \begin{cases}
n^{-1/2} & \text{if } m < \min(p,2), \\
n^{-1/2} \log(n) & \text{if } m = \min(p,2), \\
n^{-\min(p,2)/(2m)} & \text{if } m > \min(p,2).
\end{cases}
\end{equation*}
Moreover, if $X$ and $Y$ are not independent,
\begin{equation*}
\E(|D_{\otimes}(X,Y; d,p) -\mathcal{W}_{d,p}(\hat P_{{\mathscr{L}}(X,Y)}, \hat P_{ \otimes})|) 
\le  \frac{C(m,p)}{\mathcal{W}_{d,p}(\mathscr{L}(X,Y),  \mathscr{L}(X)  \otimes \mathscr{L}(Y))^{p-1}}  \, r(n).
\end{equation*}
\end{theorem}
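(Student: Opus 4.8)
The plan is to reduce the statement to known sample-complexity estimates for the empirical Wasserstein distance in the two-sample regime, taking advantage of the fact that the two empirical measures $\hat P_{\mathscr{L}(X,Y)}$ and $\hat P_{\otimes}$ are built from disjoint blocks of the sample and hence are independent, with the right marginal laws. Concretely, recall that $D_\otimes(X,Y;d,p)^p = \W_{d,p}(\mathscr{L}(X,Y),\mathscr{L}(X)\otimes\mathscr{L}(Y))^p$, that $\hat P_{\mathscr{L}(X,Y)}$ is the empirical measure of $n/3$ i.i.d.\ draws from $\mathscr{L}(X,Y)$, and that $\hat P_\otimes$ is the empirical measure of $n/3$ i.i.d.\ draws from $\mathscr{L}(X)\otimes\mathscr{L}(Y)$ (indeed $X_{\frac n3+i}$ and $Y_{\frac{2n}3+i}$ come from disjoint index ranges, so they are independent and distributed as $\mathscr{L}(X)$ and $\mathscr{L}(Y)$ respectively; moreover the two empirical measures themselves are built from disjoint index ranges, hence independent). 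So the quantity to control is exactly $\E\bigl(\bigl|\W_{d,p}(P_0,P_1)^p - \W_{d,p}(\hat P_0,\hat P_1)^p\bigr|\bigr)$ where $P_0=\mathscr{L}(X,Y)$, $P_1=\mathscr{L}(X)\otimes\mathscr{L}(Y)$, and $\hat P_0,\hat P_1$ are independent $(n/3)$-sample empirical versions, all supported on $[0,1]^{2m}$.

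For the first inequality I would invoke directly the two-sample empirical $\W^p$ convergence rates for measures on a bounded cube: by triangle inequality
\begin{equation*}
\bigl|\W_{d,p}(P_0,P_1)^p - \W_{d,p}(\hat P_0,\hat P_1)^p\bigr| \le \bigl|\W_{d,p}(P_0,P_1)^p - \W_{d,p}(\hat P_0,P_1)^p\bigr| + \bigl|\W_{d,p}(\hat P_0,P_1)^p - \W_{d,p}(\hat P_0,\hat P_1)^p\bigr|,
\end{equation*}
and each term is bounded by a one-sample $\W_{d,p}^p$ discrepancy $\W_{d,p}(\hat Q, Q)^p$ in dimension $2m$, whose expectation is $O(r_{2m,p}(n))$; but — and this is the crucial refinement — the sharper results of \citet{ManoleWeed2024,Hundrieser2024} control $\E(\W_{d,p}(\hat Q,Q)^p)$ (not the $p$-th power of $\E\W_{d,p}$) with the stated rate $r(n)=r_{m,p}(n)$ which, thanks to the extra power $p$ on the Wasserstein distance, improves the naive $n^{-1/(2m)}$ to $n^{-\min(p,2)/(2m)}$ and to parametric rates when $m<\min(p,2)$. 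I would cite \citet[Corollary 3]{ManoleWeed2024} for $2m\ge5$ and \citet[Sections 3.3--3.4]{Hundrieser2024} for the remaining low-dimensional cases, and note that these hold uniformly over measures on $[0,1]^{2m}$, so no separation or smoothness of $P_0,P_1$ is needed for the first bound. This gives the first displayed inequality with $C(m,p)$ absorbing the $3^{\text{something}}$ from passing from $n$ to $n/3$ and the factor $2$ from the triangle inequality.

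For the second inequality I would use the elementary bound $|a-b|\le a^{1-p}|a^p-b^p|$ valid for $0<b\le a$ (and symmetrically), which after taking $a = \max(\W_{d,p}(P_0,P_1),\W_{d,p}(\hat P_0,\hat P_1))$ and $b$ the minimum gives
\begin{equation*}
\bigl|\W_{d,p}(P_0,P_1)-\W_{d,p}(\hat P_0,\hat P_1)\bigr| \le \W_{d,p}(P_0,P_1)^{1-p}\,\bigl|\W_{d,p}(P_0,P_1)^p-\W_{d,p}(\hat P_0,\hat P_1)^p\bigr|
\end{equation*}
on the event $\W_{d,p}(\hat P_0,\hat P_1)\le\W_{d,p}(P_0,P_1)$, and on the complementary event one argues with a slightly different constant; taking expectations and plugging in the first inequality yields the claimed bound with the prefactor $\W_{d,p}(\mathscr{L}(X,Y),\mathscr{L}(X)\otimes\mathscr{L}(Y))^{-(p-1)}$, finite exactly because $X,Y$ are not independent. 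This last step is the delicate one: one has to handle the event where the empirical distance exceeds the population distance, since there $a^{1-p}$ with $p>1$ is the \emph{empirical} distance raised to a negative power and could in principle be small, but since we only need $|a-b|\le b^{1-p}|a^p-b^p|$ still holds with $b$ the smaller value, and $b\ge$ (population distance)$-$(fluctuation), a short case analysis (as in \citet[Corollary 4]{ManoleWeed2024}) closes it. The main obstacle is therefore not the transport theory — which is packaged in the cited sample-complexity results — but making the division-into-blocks bookkeeping rigorous (ensuring independence and correct marginals of $\hat P_{\mathscr{L}(X,Y)}$ and $\hat P_\otimes$) and carrying the $p$-th-root conversion through carefully when $p>1$.
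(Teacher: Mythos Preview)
Your approach is essentially the paper's: for the first inequality it simply invokes \citet[Corollary 3]{ManoleWeed2024} (case $2m\ge5$) and \citet[Sections 3.3--3.4]{Hundrieser2024} (remaining cases), and for the second it applies the elementary estimate $|a-b|\le a^{1-p}|a^p-b^p|$ as in \citet[Corollary 4]{ManoleWeed2024}. Two minor corrections to your exposition, though.

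First, your triangle-inequality decomposition does not produce the stated rate. The claim that ``each term is bounded by a one-sample $\W_{d,p}^p$ discrepancy $\W_{d,p}(\hat Q,Q)^p$'' is false: for $p=2$, $P_0=\delta_0$, $\hat P_0=\delta_{1/2}$, $P_1=\delta_1$ one has $|\W^2(P_0,P_1)-\W^2(\hat P_0,P_1)|=3/4>1/4=\W^2(P_0,\hat P_0)$. If instead you control $|\W^p-\W^p|$ by $C|\W-\W|\le C\,\W(P_0,\hat P_0)$ via boundedness of the support, you only recover the Fournier--Guillin rate $n^{-1/(2m)}$, not $n^{-\min(p,2)/(2m)}$. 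The sharp rates in the cited references come from a direct two-sample analysis via Kantorovich duality and empirical-process bounds on the potentials, and the paper invokes them as a black box without any triangle-inequality reduction---this is precisely the point of the sentence preceding the theorem about ``faster rates leveraging smoothness of the cost''.

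Second, the case analysis you worry about for the second inequality is unnecessary: the bound $|a-b|\le a^{1-p}|a^p-b^p|$ holds for \emph{all} $a,b>0$ and $p\ge1$ (if $a\ge b$ it reduces to $a^{p-1}\ge b^{p-1}$; if $a\le b$ it reduces to $b^{p-1}\ge a^{p-1}$). So take $a=\W_{d,p}(\mathscr{L}(X,Y),\mathscr{L}(X)\otimes\mathscr{L}(Y))$, which is deterministic and strictly positive by the non-independence assumption, and $b=\W_{d,p}(\hat P_{\mathscr{L}(X,Y)},\hat P_\otimes)$; the second inequality follows immediately from the first after taking expectations, with no event splitting.
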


This result implies that the plug-in estimator recovers a parametric rate (up to a logarithmic factor) if $m \leq \min(p,2)$. While the result gives convergence in expectation, it can be shown that $\mathcal{W}_{d,p}(\hat P_{{\mathscr{L}}(X,Y)}, \hat P_{ \otimes})p)$ concentrates well around its expectation, in the sense that it is sub-Gaussian with sub-Gaussain constant scaling as $n^{-1/2}$ \citep[Theorem 2]{chizat2020faster}.

If $n$ is moderately large using only $n/3$ samples to estimate the joint distribution can considerably increase the bias. At the price of introducing dependence between the samples for the joint and independent distributions, one can consider a finite permutation $\sigma \in \mathcal{S}_n$ and define
\begin{align*}
&\hat{P}_{\mathscr{L}(X,Y)}  = \frac{1}{n} \sum_{i=1}^{n} \delta_{(X_i,Y_i)}; \quad \hat{P}_{ \otimes } = \frac{1}{n} \sum_{i=1}^{n} \delta_{(X_{i},Y_{\sigma(i)})}. 
\end{align*}
\cite{XiaoWang2019} use this second method, borrowed from \cite{ArconesGine1992}, which is not specific to the Wasserstein distance. A third option is to estimate the product measure with $n^2$ dependent pairs $\hat{P}_{\mathscr{L}(X)} \otimes \hat{P}_{\mathscr{L}(Y)}$, which is equal to
\[
\frac{1}{n^2} \sum_{i,j=1}^{n} \delta_{(X_{i},Y_{j})} = \frac{1}{n} \sum_{i=1}^{n} \delta_{X_{i}} \otimes \frac{1}{n} \sum_{j=1}^{n} \delta_{Y_{j}}.
\]
This has been used by \cite{Liu2022} to estimate an entropy regularized version of the Wasserstein distance. As therein, we expect the analysis of the sample complexity to be more difficult to prove. 

The previous estimators provide natural ways to estimate $D_{\otimes}$ but extra care is needed to recover the sample complexity of the plug-in estimator of $I_{\otimes}$ because of the presence of the denominator. Interestingly, a thorough analysis of the sample complexity of $I_\otimes$ still seems to be missing in the literature.

\subsection{The conditional index of dependence}

The sample complexity of $I_{|\bullet}$ can also be studied through a plug-in estimator, as detailed in \cite{wiesel2022measuring} for $p=1$ and $d$ the standard Euclidean metric. Instead of creating an artificial sample for the independent coupling, as for $D_{\otimes}$, the delicate part is to estimate the conditional distribution $\mathscr{L}(Y|X)$. To guarantee that it can be done efficiently, the author introduces the assumption that a version of the conditional law $x \mapsto \mathscr{L}(Y|X=x)$ is $\W$-Lipschitz with respect to $x$, in the following sense: there exists $L \geq 0$ and an event $A$ with $\mathbb{P}(X \in A) = 1$ such that, for all $x, x' \in A$,
\begin{equation}
\label{def:wlipschitz}
\W(\mathscr{L}(Y|X=x), \mathscr{L}(Y|X=x')) \leq L d_\mathbb{X}(x,x'). 
\end{equation}

To define an estimator of $\mathscr{L}(Y|X)$, the natural stratgegy, see e.g. \cite{backhoff2022estimating}, it to first divide $[0,1]^m$ into $\Phi_n$ disjoint cubes and send the components of each observation pair to the center of the nearest cube through a map denoted by $\phi_n$. One defines
\begin{equation*}
\hat{P}_{\mathscr{L}(Y|X=x)} 
=\frac{1}{|\{j:\phi_{n}(X_j) = \phi_{n}(x)\}|} \sum_{j:\phi_{n}(X_j) = \phi_{n}(x) } \delta_{\phi_{n}(Y_j)},
\end{equation*}
where $| \cdot |$ is the cardinality of a set. 

\begin{proposition}[Theorem 6.2 in \cite{wiesel2022measuring}]
Let $(X_i,Y_i) \simiid \mathscr{L}(X,Y)$  on $[0,1]^{m} \times [0,1]^{m}$ for $i=1,\dots, n$, 
such that a version of the map $x \mapsto \mathscr{L}(Y|X=x)$ is $\mathcal{W}$-Lipschitz with constant $L$. 
With $d$ the Euclidean distance, $p=1$ and choosing $\Phi_n = n^{1/3}$ if $m=1$ and $\Phi_n = n^{1/2}$ if $m \geq 2$,
\begin{multline*}
\E\bigg( \bigg|D_{| \bullet}(X,Y; d, 1)  
 - \frac{1}{n} \sum_{i=1}^n \mathcal{W}_{d,1}\bigg(\hat{P}_{\mathscr{L}(Y|X=\phi_{n}(X_i))}, 
\frac{1}{n} \sum_{i=1}^n \delta_{\phi_{n}(Y_j)})\bigg)\bigg| \bigg) \\
\le C(m,L,\mathscr{L}(Y))
\begin{cases}
n^{-1/3} & \text{if } m=1 \\
n^{-1/4}\log(n) & \text{if } m=2\\
n^{-1/(2m)} & \text{if } m \ge 3,
\end{cases}
\end{multline*}
where $C(m,L,\mathscr{L}(Y))$ is a constant that depends on the dimension $m$, the Lipschitz constant $L$, and $\mathscr{L}(Y)$. 
\end{proposition}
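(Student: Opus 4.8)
The plan is to benchmark the estimator against the infeasible, unbiased Monte Carlo estimator $\frac1n\sum_{i=1}^n \W_{d,1}(\mathscr{L}(Y|X=X_i),\mathscr{L}(Y))$, whose expectation over the i.i.d.\ sample is exactly $D_{|\bullet}(X,Y;d,1)$, and then to control three sources of error: the Monte Carlo fluctuation of this outer average over $X$; the \emph{discretization bias} created by $\phi_n$ together with the implicit smoothing of the conditional law over a cell; and the \emph{estimation error} from replacing the true conditional and marginal laws by cell-wise empirical measures. I write $N_i$ for the number of sample points whose $X$-coordinate lies in the cell of $X_i$, and $\hat{\mathscr{L}}(Y)=\frac1n\sum_{j=1}^n\delta_{\phi_n(Y_j)}$; recall that each cell has diameter at most $\sqrt m\,\Phi_n^{-1/m}$ and expected occupancy of order $n/\Phi_n$.

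For the per-term analysis I would apply the triangle inequality for $\W_{d,1}$. First,
\begin{align*}
\Big| \W_{d,1}(\mathscr{L}(Y|X=X_i),\mathscr{L}(Y)) &- \W_{d,1}(\hat{P}_{\mathscr{L}(Y|X=\phi_n(X_i))},\hat{\mathscr{L}}(Y)) \Big| \\
&\le \W_{d,1}\big(\mathscr{L}(Y|X=X_i),\hat{P}_{\mathscr{L}(Y|X=\phi_n(X_i))}\big) + \W_{d,1}\big(\mathscr{L}(Y),\hat{\mathscr{L}}(Y)\big).
\end{align*}
The first term on the right I would split, through the conditional mean measure $\bar\mu_i=\frac{1}{N_i}\sum_{j:\,\phi_n(X_j)=\phi_n(X_i)}\mathscr{L}(\phi_n(Y)|X=X_j)$ of the relevant cell, into a bias part $\W_{d,1}(\mathscr{L}(Y|X=X_i),\bar\mu_i)$ and an estimation part $\W_{d,1}(\bar\mu_i,\hat{P}_{\mathscr{L}(Y|X=\phi_n(X_i))})$. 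The bias part is of order $(1+L)\Phi_n^{-1/m}$, up to a dimensional factor, by convexity of $\W_{d,1}$, the $\W$-Lipschitz assumption~\eqref{def:wlipschitz} (conditional laws over one cell are mutually within $L\sqrt m\,\Phi_n^{-1/m}$), and the elementary bound that $\W_{d,1}$ between a law and its $\phi_n$-image is at most the cell diameter. The estimation part is, conditionally on $X_1,\dots,X_n$, the Wasserstein distance on $[0,1]^m$ between a measure and the empirical measure of $N_i$ independent points having it as their mean; the Fournier--Guillin estimate \citep{Fournier2015rate} applies here, since its dyadic-cube argument uses only $\E[(\hat\mu(Q)-\bar\mu(Q))^2]\le\bar\mu(Q)/N_i$ and so extends to this independent-but-not-identically-distributed setting (alternatively one couples the within-cell $Y_j$'s to i.i.d.\ draws from a single reference conditional at cost $O((1+L)\Phi_n^{-1/m})$, absorbed into the bias), yielding a conditional expectation of order $N_i^{-1/2}$, $N_i^{-1/2}\log N_i$ or $N_i^{-1/m}$ according to $m=1$, $m=2$, $m\ge3$. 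The marginal term $\W_{d,1}(\mathscr{L}(Y),\hat{\mathscr{L}}(Y))$ splits identically into a bias $\le\sqrt m\,\Phi_n^{-1/m}$ and a genuinely i.i.d.\ Fournier--Guillin term of order $n^{-1/2}$, $n^{-1/2}\log n$, $n^{-1/m}$, which is dominated in every dimension.

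It then remains to average over $i$ and take expectations. The outer Monte Carlo error has $L^1$-norm $O(n^{-1/2})$ because the summands are i.i.d.\ and bounded by $\mathrm{diam}([0,1]^m)=\sqrt m$, and the bias contributes $O((1+L)\Phi_n^{-1/m})$. For the conditional estimation error, set $\alpha=\tfrac12$ if $m\le2$ and $\alpha=\tfrac1m$ if $m\ge3$; then $\frac1n\sum_i N_i^{-\alpha}=\frac1n\sum_c N_c^{1-\alpha}$ where the sum runs over the $\Phi_n$ cells, and since $\sum_c N_c=n$ and $x\mapsto x^{1-\alpha}$ is concave, Jensen's inequality gives $\sum_c N_c^{1-\alpha}\le\Phi_n^{\alpha}n^{1-\alpha}$, hence an expected contribution of order $(\Phi_n/n)^{\alpha}$ (with an extra $\log n$ when $m=2$). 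Balancing the bias $\Phi_n^{-1/m}$ against $(\Phi_n/n)^{\alpha}$ yields precisely $\Phi_n=n^{1/3}$ when $m=1$, both terms then being of order $n^{-1/3}$, and $\Phi_n=n^{1/2}$ when $m\ge2$, giving the rates $n^{-1/4}\log n$ for $m=2$ and $n^{-1/(2m)}$ for $m\ge3$; this is exactly the claimed bound, and the dependence of the constant on $m$, $L$ and $\mathscr{L}(Y)$ is tracked through the Fournier--Guillin and Lipschitz constants.

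I expect the conditional estimation term to be the main obstacle. One must handle the random, possibly very small cell occupancies $N_i$ — notably the degenerate case $N_i=1$, where the empirical conditional is a single Dirac mass — the fact that the points inside a cell are independent but not identically distributed, and the coupling between the cell $X_i$ falls into and the $Y_j$'s used to estimate the conditional there. Conditioning on $X_1,\dots,X_n$ decouples the $Y_j$'s and reduces matters to an independent-but-not-identically-distributed Fournier--Guillin bound (or, via the coupling described above, to the i.i.d.\ case); the Jensen step then converts the apparently worst-case sum $\sum_c N_c^{1-\alpha}$ into the sharp $\Phi_n^{\alpha}n^{1-\alpha}$ precisely because the cell occupancies sum to $n$. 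Everything else — the outer Monte Carlo average and the i.i.d.\ marginal estimation — is standard.
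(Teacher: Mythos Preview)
Your argument is correct and complete: the benchmarking against the infeasible Monte Carlo average, the three-way split via the triangle inequality, the use of the $\W$-Lipschitz hypothesis to control the discretization bias at order $(1+L)\Phi_n^{-1/m}$, the extension of the Fournier--Guillin dyadic-cube bound to independent-but-not-identically-distributed samples (which indeed only needs the variance inequality you state), and the Jensen step $\sum_c N_c^{1-\alpha}\le\Phi_n^{\alpha}n^{1-\alpha}$ all work as you describe, and the balancing recovers exactly the rates and the stated choices of $\Phi_n$.

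The paper's own route is different in packaging. It does not decompose the error directly but instead invokes the Lipschitz continuity of $P\mapsto D_{|\bullet}(P)$ with respect to the \emph{adapted} Wasserstein distance $\mathcal{AW}_1$ (Theorem~4.1 of \cite{wiesel2022measuring}, recalled in Section~\ref{sec:continuity_conditional}), so that the whole problem reduces to bounding $\E[\mathcal{AW}_1(\hat P_n,\mathscr{L}(X,Y))]$ for the discretized empirical measure; the latter rates are then quoted from \cite{backhoff2022estimating}. In substance the two arguments are close---the adapted-Wasserstein rate in \cite{backhoff2022estimating} is proved by essentially the same cell-wise Fournier--Guillin analysis you carry out---but the paper's route isolates a reusable structural fact (continuity in adapted topology) and delegates the hard estimate to an existing result, whereas your route is more self-contained and makes transparent exactly which term forces each rate and why the $\W$-Lipschitz hypothesis is needed.
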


The proof is based on the rate of the empirical estimator of the adapted Wasserstein distance in \cite{backhoff2022estimating}. 
\cite{wiesel2022measuring} remarkably shows that the same rates hold for $I_{|\bullet}$, and also proves a sub-Gaussian concentration bound of the estimator of $D_{|\bullet}$ around its expectation.

\subsection{Entropic regularization}

In both cases the rates exhibit the ``curse of dimensionality'', in the sense that the dependency of the rate is exponential in the dimension $m$.
This is actually a well-known problem of optimal transport \citep{dudley1969speed}, and it may limit the applicability of the indices above as a huge number of samples are required to estimate them with good precision.

A relatively recent proposal to circumvent this issue is the entropic regularization of optimal transport \citep{cuturi2013sinkhorn,Peyre2019computational,nutz2021introduction}. 
Given a cost function $c : \mathbb{M} \times \mathbb{M} \to [0, + \infty)$ and a regularization parameter $\varepsilon > 0$, the entropic Wasserstein discrepancy $\W^{(\varepsilon)}_{c}(P,Q)$ between $P,Q \in \mathcal{P}(\mathbb{M})$ is defined as 
\begin{equation*}
 \inf_{\gamma \in \Gamma(P,Q)} \left\{ \E_{(Z,Z') \sim \gamma}(c(Z,Z')) + \varepsilon \mathrm{KL}(\gamma | P \otimes Q) \right\}.  
\end{equation*}
Here $\mathrm{KL}(\gamma | P \otimes Q)$ is the Kullback-Leibler divergence on the product space $\mathbb{M} \times \mathbb{M}$, defined as 
\[
\begin{cases}
\E_{(Z,Z') \sim \gamma} \left( \log f(Z,Z') \right) & \text{if } \gamma \ll P \otimes Q \\
+\infty & \text{otherwise},
\end{cases}
\]
where $f$ is the density of $\gamma$ with respect to $P \otimes Q$. This forces the infimum over all couplings $\gamma \in \Gamma(P,Q)$ to be absolutely continuous with respect to $P \otimes Q$, which excludes the classical optimal coupling.
As with $\W$, here $c$ is also not necessarily a distance, and typically is taken to be the square of a distance.
As $\varepsilon \to 0$, $\W^{(\varepsilon)}_{c}$ converges to $\W_{c,1}$: in this sense $\W^{(\varepsilon)}_{c}$ can be thought of as an approximation to $\W_{c,1}$.   

In addition to the better computational properties that we comment below, 
an advantage of the entropic optimal transport problem is the sample complexity: if $\hat P_n$, $\hat Q_n$ 
are empirical measures obtained from $n$ i.i.d. samples of 
$P,Q \in \mathcal{P}(\R^m)$, then 
\begin{equation*}
\E \left( \left| \W^{(\varepsilon)}_{c}( \hat P_n, \hat Q_n) - \W^{(\varepsilon)}_{c}(P,Q) \right| \right) \leq \frac{C(\varepsilon,m,P,Q)}{\sqrt{n}},
\end{equation*}
for some constant $C(\varepsilon,m,P,Q)$ which depends on the sub-Gaussian constants of $P$ and $Q$, and blows up to $+ \infty$ as $\varepsilon \to 0$ \citep{genevay2019sample,mena2019statistical}.
In particular, for a given 
$\varepsilon > 0$, the rate is parametric in $n$, independently on the dimension $m$. 

Given 
the better sample complexity in high dimensions, it may be tempting to replace
the classical Wasserstein distance $\W$ by the entropic one $\W^{(\varepsilon)}$ in all the proposed indices. However, one has to be careful as some theoretical properties are lost. Most importantly, though $\W^{(\varepsilon)}_{c} \geq 0$, we have 
$\W^{(\varepsilon)}_{c}(P,P) > 0$ even if $c$ is non-degenerate. Thus the index built with $\W^{(\varepsilon)}_{c}$ would fail to detect independence. A surprisingly simple fix to retrieve non-degeneracy 
is to introduce the 
Sinkhorn divergence $S^{(\varepsilon)}_{c}(P,Q)$ defined as
\begin{equation*}
\W^{(\varepsilon)}_{c}(P,Q) - \frac{1}{2} \W^{(\varepsilon)}_{c}(P,P) - \frac{1}{2} \W^{(\varepsilon)}_{c}(Q,Q). 
\end{equation*}
By construction $S^{(\varepsilon)}_{c}(P,P) = 0$. Moreover, 
$S^{(\varepsilon)}_{c}(P,Q) \geq 0$ with equality if and only if $P = Q$, under some assumptions which are satisfied if $c$ is the squared Euclidean cost on $\R^m$ \citep{feydy2019interpolating}. 
The sample complexity of $S^{(\varepsilon)}_{c}$ is as good as the one of $\W^{(\varepsilon)}_{c}(P,Q)$. Based on these considerations, \cite{Liu2022} proposed 
\begin{equation*}
D^{(\varepsilon)}_{\otimes}(X,Y) = S^{(\varepsilon)}_{c}(\mathscr{L}(X,Y),\mathscr{L}(X) \otimes \mathscr{L}(Y)). 
\end{equation*}
By construction $D^{(\varepsilon)}_{\otimes}(X,Y) \geq 0$ with equality if and only if $X$ and $Y$ are independent. Moreover, if $(X_i,Y_i)_{i=1,\ldots,n}$ are i.i.d. copies of a sub-Gaussian pair $(X,Y)$ in $\R^m \times \R^m$,
with $\hat P_{\mathscr{L}(X,Y)} = n^{-1} \sum_{i=1}^n \delta_{(X_i,Y_i)}$ the empirical distribution,
\citet{Liu2022} show that
\begin{equation*}
\E \left( \left| D^{(\varepsilon)}_{\otimes}(\hat P_{\mathscr{L}(X,Y)}) - D^{(\varepsilon)}_{\otimes}(X,Y) \right| \right) 
\leq  C(m) \left( \varepsilon + \frac{\sigma^{\lceil 5m/2 \rceil + 6}  }{\varepsilon^{\lceil 5m/4 \rceil + 2}} \right) \cdot \frac{1}{\sqrt{n}}.
\end{equation*}
where $\sigma$ is the sub-Gaussian constant of $X$ and $Y$. The dependence is now parametric in $n$.
Though $D^{(\varepsilon)}_{\otimes}(X,Y)$ is an attractive candidate to measure proximity to independence, not much is known about tractable upper bounds in order to normalize it to build an index, with a neat characterization of extremal cases. 

Regarding the conditional index of dependence, \cite{borgonovo2024global} also consider
\begin{equation*}
D^{(\varepsilon)}_{|\bullet}(X,Y) = \E_X(\mathcal{W}^{(\varepsilon)}_{c}(\mathscr{L}(Y|X), \mathscr{L}(Y))).
\end{equation*}
As $\W^{(\varepsilon)}_{c}$ is not debiased, thus $\mathcal{W}^{(\varepsilon)}_{c}(P,P) > 0$, 
this index does not characterize independence: we have $D^{(\varepsilon)}_{|\bullet}(X,Y) > 0$ even if $X$ and $Y$ are independent. On the other hand, the authors show that the upper bound 
stays valid. Being $Y'$ an independent copy of $Y$,
\begin{equation*}
D^{(\varepsilon)}_{|\bullet}(X,Y) \leq \E(c(Y,Y')),
\end{equation*}
with equality if and only if $Y = f(X)$ a.s. for a measurable function $f$. 

\section{Computational aspects}
\label{sec:comp_comp}

In the previous section, we saw how to build estimators of the indices, but these estimators still require to solve optimal transport problems between discrete measures. If $Z,Z'$ are discrete, in the sense that they take a finite number of values, then $\W(\mathscr{L}(Z),\mathscr{L}(Z'))$ can be computed by solving a linear program: we refer to \cite{Peyre2019computational,merigot2021optimal} and references therein for a thorough introduction to the topic. 

If $Z,Z'$ 
take at most $N$ values, 
to compute $\W(\mathscr{L}(Z),\mathscr{L}(Z'))$ exactly one needs $O(N^3 \log(N))$ operations \citep{orlin1997polynomial}. There are 
alternative algorithms
\citep{bertsekas1979distributed}, which can be attractive since we usually use an approximation of our target distribution anyway (e.g., the corresponding empirical measure). However, these algorithms still have a complexity in $N$ that is at least quadratic. This complexity of the solvers, coupled with the statistical complexity described in the previous section, makes the overall computational complexity of estimating the index $I_\otimes$ or $I_{|\bullet}$ 
prohibitive in high dimensions. 

As explained in the previous section, a proposal to reduce the sample complexity is to use $\W_c^{(\varepsilon)}$ the entopic-regularized optimal transport distance. 
Its computation is possible via the so-called Sinkhorn algorithm. Although the computational complexity in this case also scales quadratically in $N$, with well crafted implementations, it can be computed in practice much faster than $\W$. 
This was also a motivation for the work \cite{Liu2022} mentioned earlier. The literature on the speed of convergence of Sinkhorn's algorithm is quite vast and we refer to \cite{Peyre2019computational,nutz2021introduction} and references therein for details and statements of various results.

As this is not the focus of this survey, we do not report 
a description of the algorithms. We advise the practitioners to use off the shelf libraries such as POT \citep{flamary2021pot} or OTT \citep{cuturi2022optimal}, which contain robust implementations of the algorithms evoked in this section. 

\section{An alternative road: distance from maximal dependence}
\label{sec:alternative}

The indices described so far aim at detecting independence, which is a well-defined coupling on any product measure space $\X \times \Y$. Maximal dependence can be defined as those couplings whose index is equal to 1 and, as we have seen in Section~\ref{sec:index}, this notion can vary according to the discrepancy $D$ and the ground cost $d$. The most popular notions of maximal dependence can be formulated in terms of a class of degenerate couplings.

\begin{definition}
Let $\mathcal{I}_1 \subset \{f:\mathbb{X} \to \mathbb{Y} \text{ Measurable}\}$. Then the set of $\mathcal{I}_1$-maximally dependent couplings is $\{(X,f(X)): f \in \mathcal{I}_1\}$.
\end{definition}

Common choices of
$\mathcal{I}_1$ include measurable functions \citep{wiesel2022measuring, nies2025transport,borgonovo2024global}, 
Lipschitz functions \citep{nies2025transport}, 
similarities \citep{mori2020earth, nies2025transport}, 
optimal transport maps (conjectured for \cite{mordant2022measuring}), linear functions \citep{Puccetti2022}, or even the identity function when $\X = \Y$. 
As nicely underlined in \cite{nies2025transport}, the natural notion of maximal dependence can vary depending on the context, and should be carefully taken into account when choosing an index of dependence. In some situations, the focus is on detecting the \emph{intensity of the statistical relationship} \citep{CifarelliRegazzini2017} rather than independence. Historically, for random variables in $\R$, this need has led to the definition of measures of concordance and Gini's indices of homophily; we refer to \cite{CifarelliRegazzini2017} for an accurate review.
With this point of view in mind, we argue that optimal transport could also be used to compute the distance from 
$\mathcal{I}_1$-maximal dependence as
\begin{equation}
\label{def:maximal}
D_{\mathcal{I}_1}(X,Y) = \inf_{f \in \mathcal{I}_1}\mathcal{W}(\mathscr{L}(X,Y), \mathscr{L}(X,f(X))).
\end{equation}
If we can find a minimizer 
$f^* \in \mathcal{I}_1$ for this problem, 
$D_{\mathcal{I}_1}$ could be a compelling alternative because it involves computing a Wasserstein distance with respect to a distribution $\mathscr{L}(X,f^*(X))$ with a degenerate support. This could sensibly reduce the sampling complexity of the evaluation of the Wasserstein distance, since it is known to adapt to the smallest dimension of the supports of the two distributions \citep{Hundrieser2024}. Moreover, there could also be cases where the Wasserstein distance can be evaluated explicitly, in the spirit of Section~\ref{sec:analytic}, as shown in the following result when $\X = \Y = \R$ and 
$\mathcal{I}_1 = \{{\rm id}\}$ contains only the identity map ${\rm id}(x) = x$.

\begin{theorem}[{Theorem 20 in \citet{Catalano2021}}]
\label{th:aos}
Let $(X,Y)$ such that $\mathscr{L}(X)$ and $\mathscr{L}(Y)$ are absolutely continuous with respect to the Lebesgue measure on $\mathbb{R}$. Then, 
\[
(x,y) \mapsto (F_X^{-1} \circ F_{X+Y}(x+y ), F_X^{-1} \circ F_{X+Y}(x+y)),
\]
is an optimal transport map 
between $\mathscr{L}(X, Y)$ and $\mathscr{L}(X, X)$ for $\mathcal{W}_{d,2}$, where $d$ is the Euclidean distance. In particular $\W_{d,2}(\mathscr{L}(X,Y), \mathscr{L}(X,X))^2$ is equal to
\begin{equation*}
\E((X-F_X^{-1}(F_{X+Y}(X+Y)))^2) + \E((Y-F_X^{-1}(F_{X+Y}(X+Y)))^2).
\end{equation*}
\end{theorem}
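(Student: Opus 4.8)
The plan is to reduce this two‑dimensional transport problem — whose target $\mathscr{L}(X,X)$ is carried by the diagonal of $\R^2$ — to a one‑dimensional optimal transport problem between $\mathscr{L}(X)$ and $\mathscr{L}(X+Y)$, for which the optimal coupling is explicit. I will assume, as is implicit in the statement, that $X$ and $Y$ have finite second moments, so that $\W_{d,2}$ is finite. Write $g = F_X^{-1}\circ F_{X+Y}$, so the candidate map is $T(x,y) = (g(x+y),g(x+y))$.

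First I would check that $T$ is admissible, i.e.\ $T_\#\mathscr{L}(X,Y) = \mathscr{L}(X,X)$. Because $\mathscr{L}(X)$ has a density, $\mathscr{L}(X+Y)$ is absolutely continuous as well, regardless of the dependence between $X$ and $Y$; hence $F_{X+Y}$ is continuous, $F_{X+Y}(X+Y)$ is uniform on $[0,1]$, and $g(X+Y) = F_X^{-1}(F_{X+Y}(X+Y))$ has law $\mathscr{L}(X)$. Consequently $T(X,Y) = (g(X+Y),g(X+Y))$ has law $\mathscr{L}(X,X)$, and the associated coupling is $\pi^\ast := (\mathrm{id},T)_\#\mathscr{L}(X,Y)$.

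For optimality I would rewrite the cost. Since the target lives on the diagonal, every $\pi \in \Gamma(\mathscr{L}(X,Y),\mathscr{L}(X,X))$ is the law of a triple $(x,y,z)$ with $(x,y)\sim\mathscr{L}(X,Y)$ and $z\sim\mathscr{L}(X)$, and its cost is
\begin{equation*}
\E_\pi\bigl[(x-z)^2+(y-z)^2\bigr] = \E\bigl[X^2+Y^2\bigr] + 2\,\E\bigl[X^2\bigr] - 2\,\E_\pi\bigl[z(x+y)\bigr],
\end{equation*}
where the first two terms do not depend on $\pi$. Thus minimizing the cost is equivalent to maximizing $\E_\pi[z\,(x+y)]$, which depends on $\pi$ only through the joint law of $z$ and $s := x+y$. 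A disintegration argument — draw $(z,s)$ from any coupling $\rho$ of $\mathscr{L}(X)$ and $\mathscr{L}(X+Y)$, then draw $(x,y)$ from the regular conditional law $\mathscr{L}(X,Y\mid X+Y=s)$ — shows that every such $\rho$ is attainable, so the problem reduces to $\sup_{\rho}\E_\rho[zs]$ over couplings of $\mathscr{L}(X)$ and $\mathscr{L}(X+Y)$. This is the classical one‑dimensional problem, maximized by the comonotone (quantile) coupling; since $X+Y$ is absolutely continuous, that coupling is realized by $s = X+Y$ and $z = F_X^{-1}(F_{X+Y}(X+Y)) = g(X+Y)$. Feeding this back into the disintegration construction recovers exactly $\pi^\ast$, so $T$ is an optimal transport map, and evaluating the cost at $\pi^\ast$ gives
\begin{equation*}
\W_{d,2}(\mathscr{L}(X,Y),\mathscr{L}(X,X))^2 = \E\bigl[(X-g(X+Y))^2\bigr] + \E\bigl[(Y-g(X+Y))^2\bigr],
\end{equation*}
which is the stated formula.

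The step I expect to demand the most care is the reduction to one dimension, i.e.\ showing that restricting attention to the joint law of $(z,\,x+y)$ loses nothing; this is where one invokes the disintegration of $\mathscr{L}(X,Y)$ along $(x,y)\mapsto x+y$ and the finiteness of the second moments. An alternative that bypasses disintegration is to verify directly that the support of $\pi^\ast$ is cyclically monotone for the squared Euclidean cost: expanding $\sum_i\bigl(|(x_i,y_i)-(z_i,z_i)|^2 - |(x_i,y_i)-(z_{i+1},z_{i+1})|^2\bigr)$ and telescoping the $z_i^2$ terms, cyclical monotonicity reduces to $\sum_i (x_i+y_i)\bigl(g(x_i+y_i) - g(x_{i+1}+y_{i+1})\bigr)\ge 0$, which holds because $g$ is non‑decreasing; optimality then follows from the fundamental theorem of optimal transport.
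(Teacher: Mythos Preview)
The paper does not give its own proof of this theorem (it is quoted from \citet{Catalano2021}), but the proof of Theorem~\ref{th:max} makes the intended mechanism explicit: the candidate map equals $\nabla\Phi$ for $\Phi(x,y)=u(x+y)$ with $u'=F_X^{-1}\circ F_{X+Y}$ non-decreasing, hence $\Phi$ convex, and optimality follows from Brenier's theorem. Your two routes to optimality---reducing to the one-dimensional problem of maximizing $\E_\rho[zs]$ over couplings of $\mathscr{L}(X)$ and $\mathscr{L}(X+Y)$, and checking cyclical monotonicity directly---are both valid and are essentially reformulations of this same convex-gradient fact, so on that front you are aligned with the paper.

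There is, however, a genuine gap in your admissibility step. The claim that ``$\mathscr{L}(X+Y)$ is absolutely continuous as well, regardless of the dependence between $X$ and $Y$'' is false: take $X$ uniform on $[0,1]$ and $Y=-X$, so that both marginals have densities yet $X+Y\equiv 0$ is a point mass. Then $F_{X+Y}(X+Y)$ is not uniform, $g(X+Y)$ is a.s.\ constant, and $T$ pushes $\mathscr{L}(X,Y)$ onto a Dirac rather than onto $\mathscr{L}(X,X)$; so the argument, and indeed the map in the statement, breaks on this example. What \emph{does} force $X+Y$ to be atomless is a density for the joint law $\mathscr{L}(X,Y)$ on $\R^2$ (the push-forward by $(x,y)\mapsto x+y$ then has density $s\mapsto\int f(x,s-x)\,dx$), or any other hypothesis guaranteeing continuity of $F_{X+Y}$. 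This is presumably the standing assumption in the original source; you should invoke it explicitly rather than the incorrect marginal argument. With that fix in place the rest of your proof goes through.
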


This result is related to the ``multi-to-one'' optimal transport \citep{chiappori2017multi}, as a two-dimensional 
distribution is transported to a one-dimensional density.

Starting from the representation for random variables in $\mathbb{R}$, \cite{Catalano2021} adapted this measure to quantify dependence between completely random measures \citep{Kingman1967} on a Polish space $\X$, the key idea being that for any random measure $\tilde \mu: \Omega \to \mathcal{M}(\mathbb{X})$, the set-wise evaluations $\tilde \mu(A)$ are random variables on $[0,+\infty)$. \cite{BeneventoDurante2024} used a similar idea on copulas to cluster time series.  Solutions for more general classes of functions 
$\mathcal{I}_1$ remain unexplored.

From the discrepancy 
$D_{\mathcal{I}_1}$ in principle one could define an index of dependence following the route of Section~\ref{sec:principled}, finding an upper bound of the form
\begin{equation}
\label{sup_maximal}
U_{\mathcal{I}_1}(X,Y) = \sup_{\mathscr{L}(X',Y') \in \Gamma(X,Y)}  D_{\mathcal{I}_1}(X',Y'). 
\end{equation}
When 
$\mathcal{I}_1 = \{\rm{id}\}$ this amounts to finding the coupling that is at a maximal distance from the ``diagonal'' coupling $\mathcal{L}(X,X)$. 
It is a difficult problem, though possibly not as difficult as finding the maximal distance from independence thanks to the expression of the optimal coupling in Theorem~\ref{th:aos}. 
We present here a new result
: a solution of \eqref{sup_maximal} under some symmetry assumption.
The techniques are inspired by \citet{Catalano2024}, which we describe later in this section.

\begin{theorem} 
\label{th:max}
Let $X$ an absolutely continuous random variable on $\R$ with finite second moment, satisfying the symmetry condition $(a-X) \eqd X$ for some $a \in \R$.
Let $\gamma_{+} = \mathscr{L}(X,X)$, $\gamma_{-} = \mathscr{L}(X, a-X)$, and  $\W = \W_{d,2}$ where $d$ is the Euclidean distance on $\R^2$. Then,
\begin{equation*}
\sup_{\gamma \in \Gamma(X,X)} \W(\gamma,\gamma_+) = \W(\gamma_-,\gamma_+).
\end{equation*}
\end{theorem}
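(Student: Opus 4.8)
The plan is to reduce the two-dimensional transport problem defining $\W(\gamma,\gamma_+)$ to a one-dimensional one, where the optimal (comonotone) coupling is explicit, and then to read off the supremum over $\gamma$ from an elementary rearrangement inequality. Write $\mu=\mathscr{L}(X)$; the symmetry assumption forces $\mathscr{L}(a-X)=\mu$, so $\gamma_-\in\Gamma(X,X)$, and it also forces $\E(X)=a/2$. Fix any $\gamma\in\Gamma(X,X)$ and let $(X_1,Y_1)\sim\gamma$. Since $\gamma_+$ is supported on the diagonal $\{(x,x):x\in\R\}$, a coupling of $\gamma$ and $\gamma_+$ is precisely a joint law of $\big((X_1,Y_1),X_2\big)$ with $X_2\sim\mu$, with transport cost $\E\big[(X_1-X_2)^2+(Y_1-X_2)^2\big]$. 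First I would use the pointwise identity (orthogonal projection onto the diagonal)
\[
(X_1-X_2)^2+(Y_1-X_2)^2\;=\;2\Big(X_2-\tfrac{X_1+Y_1}{2}\Big)^2+\tfrac{1}{2}(X_1-Y_1)^2 .
\]
Setting $S=(X_1+Y_1)/2$, the last term does not involve $X_2$ and the first depends only on $(X_2,S)$, so optimizing over couplings collapses (via a standard gluing/disintegration over Polish spaces --- this is exactly the ``multi-to-one'' phenomenon underlying Theorem~\ref{th:aos}) to the one-dimensional optimal transport between $\mathscr{L}(S)$ and $\mu$, whose value is $\int_0^1\big(F_S^{-1}(t)-F_X^{-1}(t)\big)^2\,dt$. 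Hence
\[
\W(\gamma,\gamma_+)^2\;=\;\tfrac{1}{2}\,\E_\gamma\big[(X_1-Y_1)^2\big]\;+\;2\int_0^1\big(F_S^{-1}(t)-F_X^{-1}(t)\big)^2\,dt .
\]

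Next I would simplify using only the marginal constraints $X_1\sim\mu$ and $Y_1\sim\mu$. Averaging $\E(X_1^2)=\E(Y_1^2)=\E(X^2)$ and using $X_1=S+\tfrac{1}{2}(X_1-Y_1)$, $Y_1=S-\tfrac{1}{2}(X_1-Y_1)$ gives $\E(S^2)+\tfrac{1}{4}\E\big[(X_1-Y_1)^2\big]=\E(X^2)$, hence $\tfrac{1}{2}\E\big[(X_1-Y_1)^2\big]=2\big(\E(X^2)-\E(S^2)\big)$. Expanding the square in the integral as $\int_0^1(F_S^{-1})^2+\int_0^1(F_X^{-1})^2-2\int_0^1 F_S^{-1}F_X^{-1}=\E(S^2)+\E(X^2)-2\int_0^1 F_S^{-1}(t)F_X^{-1}(t)\,dt$ and combining the two displays, all the $\E(S^2)$ terms cancel and we are left with
\[
\W(\gamma,\gamma_+)^2\;=\;4\,\E(X^2)\;-\;4\int_0^1 F_S^{-1}(t)\,F_X^{-1}(t)\,dt .
\]
Thus maximizing $\W(\gamma,\gamma_+)$ over $\gamma\in\Gamma(X,X)$ is the same as minimizing $\int_0^1 F_S^{-1}(t)F_X^{-1}(t)\,dt$ over the laws $\mathscr{L}(S)$ of $S=(X_1+Y_1)/2$ obtained from such couplings.

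The crux is then a lower bound on this integral. The marginal constraint gives $\E(S)=\E(X)=a/2$, and $F_S^{-1},F_X^{-1}$ are both nondecreasing, so with $U$ uniform on $(0,1)$ the variables $F_S^{-1}(U)$ and $F_X^{-1}(U)$ are comonotone and therefore nonnegatively correlated:
\[
\int_0^1 F_S^{-1}(t)F_X^{-1}(t)\,dt\;\ge\;\Big(\int_0^1 F_S^{-1}\Big)\Big(\int_0^1 F_X^{-1}\Big)\;=\;\frac{a^2}{4}.
\]
This yields $\W(\gamma,\gamma_+)^2\le 4\E(X^2)-a^2$ for every $\gamma\in\Gamma(X,X)$. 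For $\gamma=\gamma_-$ one has $S=\tfrac{1}{2}\big(X+(a-X)\big)=\tfrac{a}{2}$ almost surely, so $F_S^{-1}\equiv\tfrac{a}{2}$ and the integral equals $\tfrac{a}{2}\,\E(X)=\tfrac{a^2}{4}$, giving $\W(\gamma_-,\gamma_+)^2=4\E(X^2)-a^2$. Combining the two, $\sup_{\gamma\in\Gamma(X,X)}\W(\gamma,\gamma_+)=\W(\gamma_-,\gamma_+)$, as claimed.

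I expect the main obstacle to be the first reduction: rigorously showing that $\W(\gamma,\gamma_+)^2$ factors through the one-dimensional law of the diagonal projection $S$, i.e. that the inner infimum over couplings of $\big((X_1,Y_1),X_2\big)$ collapses to the scalar transport cost between $\mathscr{L}(S)$ and $\mu$ --- this needs a careful gluing/disintegration argument, and one should check that the degenerate regime is covered (for instance $\gamma=\gamma_-$ makes $X_1+Y_1$ non-absolutely-continuous, so Theorem~\ref{th:aos} does not apply verbatim, but the projection argument still does). Everything after that reduction is the one-line comonotone covariance inequality together with routine bookkeeping of second moments.
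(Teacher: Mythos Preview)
Your argument is correct, and the disintegration step you flag as the ``main obstacle'' is indeed standard: any coupling of $((X_1,Y_1),X_2)$ pushes forward to a coupling of $(S,X_2)$, giving the lower bound, while gluing the optimal 1D coupling of $(\mathscr{L}(S),\mu)$ with the disintegration of $\gamma$ along the level sets of $S$ gives the matching upper bound---no absolute continuity of $S$ is needed, so the degenerate case $\gamma=\gamma_-$ is covered.

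Your route differs from the paper's. The paper works on the dual side: it invokes Theorem~\ref{th:aos} to identify the Kantorovich potential $\varphi(x,y)=\tfrac{1}{2}(x^2+y^2)-u(x+y)$ for the transport from $\gamma$ to $\gamma_+$ (with $u$ convex, $u'=F_X^{-1}\circ F_{X+Y}$), uses this same pair $(\varphi,\psi)$ as an admissible dual for the transport from $\gamma_-$ to $\gamma_+$, subtracts, and closes with Jensen's inequality $\E[u(X+Y)]\ge u(a)$. You work on the primal side: the orthogonal decomposition of the cost gives the same diagonal-projection structure, but you push it through to the explicit formula $\W(\gamma,\gamma_+)^2=4\E(X^2)-4\int_0^1 F_S^{-1}F_X^{-1}$ and conclude with the comonotone covariance (Chebyshev) inequality. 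The two key inequalities---Jensen for the convex antiderivative $u$ versus nonnegative covariance of increasing functions of a uniform---are cousins, but the presentations are genuinely different. Your version is more self-contained (no Kantorovich duality, no appeal to Theorem~\ref{th:aos}) and yields an explicit closed form for $\W(\gamma,\gamma_+)^2$ that may be useful in its own right; the paper's duality argument is terser and sidesteps the bookkeeping of second moments.
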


\begin{proof}
Let $\gamma = \mathscr{L}(X,Y) \in \Gamma(X,X)$. The transport map of Theorem~\ref{th:aos} between $\gamma$ and $\gamma_+$ can be written as the gradient of the convex function $(x,y) \mapsto  u(x+y)$, where $u : \R \to \R$ is a convex function satisfying
$u'(z) = F_X^{-1} (F_{X+Y}(z))$. Thus we can find a pair $(\varphi,\psi)$ of optimal Kantorovich potentials for the problem of sending $\gamma$ onto $\gamma_+$ with  $\varphi(x,y) = (|x|^2 + |y|^2)/2 - u(x+y)$ \citep[Section 1.3.1]{Santambrogio2015}. By Kantorovich duality 
\citep[Theorem 1.40]{Santambrogio2015},
\begin{align*}
\W^2(\gamma,  \gamma_+) & = \E(\varphi(X,Y)) + \E(\psi(X,X)), \\
\W^2(\gamma_-,  \gamma_+) & \geq \E(\varphi(X, a-X)) + \E(\psi(X,X)).
\end{align*}
Subtracting these expressions, given the expression of $\varphi$ and as $X \eqd Y \eqd ( a-X)$,  we obtain
\begin{equation*}
\W^2(\gamma_-,  \gamma_+)  - \W^2(\gamma,\gamma_+) 
 \geq \E\left( u(X+Y) \right) - \E \left( u(X + a-X) \right) 
 = \E(u(X+Y)) - u(a). 
\end{equation*}
As $u$ is convex and $\E(X+Y) = a$, by Jensen's inequality the last term is non-negative. Thus $\W(\gamma,\gamma_+) \leq \W(\gamma_-,\gamma_+)$, which is the conclusion as $\gamma$ is arbitrary.  
\end{proof}

Theorem~\ref{th:max} determines the expression for a new index of concordance,   
\[ 
I(X,Y) = 1- 2 \frac{\W(\mathscr{L}(X,Y),\mathscr{L}(X,X))}{\W(\mathscr{L}(X, a-X),\mathscr{L}(X,X))}, 
\]
which takes values in $[-1,1]$ and distinguishes between positive and negative dependence. Since the uniform distribution on [0,1] is absolutely continuous and satisfies the symmetry condition with $a = 1$, the most natural use of the index appears to be on copulas, which also guarantees invariance with respect to bijective increasing tranformations. We leave to future work a detailed exploration of the properties of this index.

A solution to 
problem \eqref{sup_maximal} for a slightly different setting has been found in \cite{Catalano2024}, where instead of couplings $\mathcal{L}(X,Y)$ between probabilities the focus is on measures with infinite mass and bounded second moments. These conditions are satisfied by many of the most common multivariate L\'evy measures appearing in the theory of infinite divisibility for dependent L\'evy processes (see e.g., \cite{Sato1999,ContTankov2004}) and dependent completely random measures \citep{Kingman1967}. The infinite mass of the L\'evy measures requires an extension of the notion of Wasserstein distance, introduced by \cite{FigalliGigli2010} under slightly different assumptions and formalized by \cite{Guillen2019} on L\'evy measures. 

Let $\Omega_m=[0,+\infty)^m \setminus \{0\}$ and let $\mathcal{M}_2 (\Omega_m)$ denote the set of positive Borel measures $\nu$ on $\Omega_m$ with finite second moment $M_2(\nu)=\int_{\Omega_m}\|s\|^2 \mathrm{d} \nu(s)<+\infty$. 

Let $\pi_i(s_1,s_2) = s_i$ be the $i$-th projection for $i=1,2$ and let $\#$ denote the pushforward of a measure, i.e. $f_\#\nu(A) =  \nu(f^{-1}(A))$ for every measurable $f$ and Borel set $A$. For $\gamma \in \mathcal{M}_2\left(\Omega_{2 m}\right)$, $(\pi_{i})_{\#} \gamma$ are measures on $[0,+\infty)^m$. We denote by $(\pi_{i} )_{\#} \gamma |_{\Omega_m}$ their restrictions to $\Omega_m$.

\begin{definition}
The set $\bar{\Gamma}(\nu^1, \nu^2)$ of extended couplings between $\nu^1,\nu^2 \in \mathcal{M}_2 (\Omega_m)$ is
\begin{equation*}
\{\gamma \in \mathcal{M}_2(\Omega_{2 m}):\, (\pi_{i})_{\#} \gamma |_{\Omega_m}=\nu^i \text{ for }i=1,2 \}.
\end{equation*}
\end{definition}

Extended couplings are compact with respect to the weak$^*$ topology on $\sigma$-finite measures. This makes it possible to define the corresponding Wasserstein distance.

\begin{definition}
The extended Wasserstein distance between $\nu^1, \nu^2 \in \mathcal{M}_2(\Omega_m)$ is
\[
\mathcal{W}_*(\nu^1, \nu^2)^2=\inf _{\gamma \in \bar{\Gamma}(\nu^1, \nu^2)} \iint_{\Omega_{2 m}} \|s_1-s_2 \|^2 \mathrm{d} \gamma (s_1, s_2).
\]
\end{definition}

There are two extended couplings that play a fundamental role in the study of dependence: the ``diagonal'' extended coupling $\gamma^{\rm co} = ({\rm id}, {\rm id})_{\#} \nu$, and the ``axis'' extended coupling $\gamma^{\perp} = (0,{\rm id})_{\#} \nu + ({\rm id},0)_{\#} \nu$, for $\nu \in \mathcal{M}_2(\Omega_m)$. When using multivariate L\'evy measures to  model dependent L\'evy processes or dependent completely random measures, $\gamma^{\rm co}$ corresponds to maximal dependence, whereas $\gamma^{\perp}$ corresponds to independence.

\begin{theorem}[{Theorem 1 in \cite{Catalano2024}}]
If $\nu \in \mathcal{M}_2(\Omega_1)$ then 
\[
\sup_{\gamma \in \bar{\Gamma}(\nu, \nu)} \mathcal{W}_*(\gamma, \gamma^{\rm co}) = \mathcal{W}_*(\gamma^{\perp}, \gamma^{\rm co}).
\]
\end{theorem}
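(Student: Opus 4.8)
The plan is to transpose, essentially line by line, the proof of Theorem~\ref{th:max} to the extended setting: the role of Theorem~\ref{th:aos} is played by a two‑to‑one optimal transport statement, and the role of Jensen's inequality by the superadditivity of convex functions vanishing at the origin. Fix an arbitrary $\gamma\in\bar{\Gamma}(\nu,\nu)$; since $\gamma^{\perp}\in\bar{\Gamma}(\nu,\nu)$, it suffices to prove $\mathcal{W}_*(\gamma,\gamma^{\rm co})\le\mathcal{W}_*(\gamma^{\perp},\gamma^{\rm co})$. \textbf{Step 1 (optimal transport onto the diagonal).} The first, and hardest, task is the extended analogue of Theorem~\ref{th:aos}: the transport problem sending $\gamma$ onto $\gamma^{\rm co}$ admits an optimal Kantorovich pair $(\varphi,\psi)$ with $\varphi(s_1,s_2)=(s_1^2+s_2^2)/2-u(s_1+s_2)$ for a convex $u:[0,+\infty)\to[0,+\infty)$ normalized by $u(0)=0$, so that $\varphi$ and $\psi$ vanish at the relevant origins as the extended framework of \cite{FigalliGigli2010,Guillen2019} requires. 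Heuristically this is a dimension reduction: for a diagonal target $(t,t)$ the quadratic cost equals $(s_1-t)^2+(s_2-t)^2=2\big(t-\tfrac{s_1+s_2}{2}\big)^2+\tfrac12(s_1-s_2)^2$, whose second term is unaffected by the choice of coupling, so the problem collapses to a one‑dimensional quadratic transport between the pushforward $\rho$ of $\gamma$ under $(s_1,s_2)\mapsto s_1+s_2$ and the diagonal marginal $\nu$ of $\gamma^{\rm co}$; its solution is the monotone rearrangement $u'$, non‑decreasing and valued in $[0,+\infty)$, and $(s_1,s_2)\mapsto(u'(s_1+s_2),u'(s_1+s_2))=\nabla\big(u(s_1+s_2)\big)$ is the associated Brenier map (cf.\ \citet[Section 1.3.1]{Santambrogio2015}), in the spirit of the multi‑to‑one transport of \cite{chiappori2017multi}. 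The main obstacle is to carry this out rigorously in $\mathcal{M}_2$, where $\gamma$, $\rho$ and $\nu$ all have infinite mass accumulating at $0$, so that the monotone rearrangement has to be built from \emph{tail} distribution functions; this is where the structure theory of the extended Wasserstein distance from \cite{Catalano2024} enters.

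\textbf{Step 2 (duality on both couplings).} With this pair in hand, Kantorovich duality for $\mathcal{W}_*$ gives
\begin{align*}
\mathcal{W}_*(\gamma,\gamma^{\rm co})^2 &= \int\varphi\,\mathrm{d}\gamma+\int\psi\,\mathrm{d}\gamma^{\rm co},\\
\mathcal{W}_*(\gamma^{\perp},\gamma^{\rm co})^2 &\ge \int\varphi\,\mathrm{d}\gamma^{\perp}+\int\psi\,\mathrm{d}\gamma^{\rm co},
\end{align*}
the second line being weak duality for the same cost. The term $\int\psi\,\mathrm{d}\gamma^{\rm co}$ is common to both lines and cancels upon subtraction, leaving $\mathcal{W}_*(\gamma^{\perp},\gamma^{\rm co})^2-\mathcal{W}_*(\gamma,\gamma^{\rm co})^2\ge\int\varphi\,\mathrm{d}\gamma^{\perp}-\int\varphi\,\mathrm{d}\gamma$.

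\textbf{Step 3 (computation and superadditivity).} Since the two marginals of $\gamma$ and of $\gamma^{\perp}$ restricted to $\Omega_1$ both equal $\nu$, since $s_i^2$ vanishes where $s_i=0$, and since $u(0)=0$, one computes $\int\varphi\,\mathrm{d}\gamma=M_2(\nu)-\int u(s_1+s_2)\,\mathrm{d}\gamma$ and $\int\varphi\,\mathrm{d}\gamma^{\perp}=M_2(\nu)-2\int u\,\mathrm{d}\nu=M_2(\nu)-\int\big(u(s_1)+u(s_2)\big)\,\mathrm{d}\gamma$, so that
\begin{equation*}
\mathcal{W}_*(\gamma^{\perp},\gamma^{\rm co})^2-\mathcal{W}_*(\gamma,\gamma^{\rm co})^2\;\ge\;\int\big(u(s_1+s_2)-u(s_1)-u(s_2)\big)\,\mathrm{d}\gamma(s_1,s_2)\;\ge\;0,
\end{equation*}
the last step being the superadditivity of a convex function $u$ with $u(0)=0$ on $[0,+\infty)$ (apply convexity to $s_i=\tfrac{s_i}{s_1+s_2}(s_1+s_2)+\tfrac{s_{3-i}}{s_1+s_2}\cdot0$). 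As $\gamma$ was arbitrary and $\gamma^{\perp}$ attains the bound, this is the claim. Conceptually, superadditivity here plays exactly the part that $\E(u(X+Y))\ge u(\E(X+Y))=u(a)$ played in Theorem~\ref{th:max}, with the distinguished point $0$ of the half‑line $\Omega_1$ standing in for the centre of symmetry $a$ — which is why no symmetry assumption on $\nu$ is needed.
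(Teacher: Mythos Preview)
The paper does not supply its own proof of this theorem: it is quoted verbatim as Theorem~1 of \cite{Catalano2024} and left unproved, with only the remark that the proof of Theorem~\ref{th:max} is ``inspired by'' the techniques there. So there is no in-paper proof to compare against directly.

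That said, your reconstruction is exactly the intended transposition. You correctly reverse-engineer the argument of Theorem~\ref{th:max}: a Brenier/Kantorovich potential of the form $\varphi(s_1,s_2)=(s_1^2+s_2^2)/2-u(s_1+s_2)$ for the transport of $\gamma$ onto the diagonal, then weak duality applied to $\gamma^{\perp}$, cancellation of the $\psi$-term, and a convexity inequality to finish. The one structural change you make is the right one: replacing Jensen's inequality $\E(u(X+Y))\ge u(\E(X+Y))$ by the superadditivity $u(s_1+s_2)\ge u(s_1)+u(s_2)$ of a convex $u$ with $u(0)=0$ on $[0,+\infty)$. This is precisely what the geometry of the extended problem dictates, since $\gamma^{\perp}$ is supported on the coordinate axes rather than on an antidiagonal, and it is why no symmetry hypothesis on $\nu$ is needed. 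Your marginal computations in Step~3 are correct, and you are careful to use $u(0)=0$ where it matters (both to make $\varphi$ vanish at the origin, as the extended framework requires, and to pull $\int u(s_i)\,\mathrm{d}\gamma$ back to $\int u\,\mathrm{d}\nu$).

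The only place that is not a proof but a programme is Step~1: establishing rigorously, for measures in $\mathcal{M}_2(\Omega_1)$ with infinite mass near the origin, that the optimal map onto the diagonal exists and has the claimed gradient-of-convex form with $u(0)=0$, and that Kantorovich duality holds with equality in the first line and as an inequality in the second. You flag this honestly; it is where the machinery of \cite{Catalano2024} (and the extended transport of \cite{FigalliGigli2010,Guillen2019}) is genuinely needed, and is the substantive content one would have to import from that reference.
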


In other terms, at least under the assumption of equal marginal distributions, the maximal distance from maximal dependence is retrieved under independence. This allows to define an index of dependence in [0,1] that detects both complete dependence and independence, following the desiderata of Section~\ref{sec:principled}. Moreover, since we are computing a distance with respect to a coupling with degenerate support, we are able to retrieve exact expressions for the distance, in the same spirit of Theorem~\ref{th:aos}.

It is still an open direction to understand whether the techniques of \cite{Catalano2024} could be employed to solve \eqref{sup_maximal} for more general classes of functions $\mathcal{I}_1$.

\section{Further proposals}
\label{sec:further}

\subsection{Conditional independence}

The measures of dependence we discussed can be adapted to measures of conditional independence with respect to a third variable $Z$ defined on the same probability space. One can consider
\begin{enumerate}
\item $\E_Z(\mathcal{W}(\mathscr{L}(X,Y|Z),  \mathscr{L}(X|Z)  \otimes \mathscr{L}(Y|Z)))$;
\item $\E_{X,Z}( \mathcal{W}(\mathscr{L}(Y|X,Z), \mathscr{L}(Y|Z)))$.
\end{enumerate}
Both quantities are non-negative, and equal to zero if and only if $(X,Y)$, conditionally to $Z$, are independent.
In particular, (1) has been used in the context of conditional independence testing in \cite{warren2021wasserstein, Neykov2024}. To estimate (1) through samples, both works assume that the map $z \mapsto \mathscr{L}(X,Y|Z=z)$ is $\mathcal{W}$-Lipschitz, which is a similar assumption to the one for the estimator of the conditional index of dependence; see \eqref{def:wlipschitz}.

\subsection{More than two random variables}
\label{sec:more_than_2}

Instead of two random variables, let us consider a measure of dependence for $m$ random variables $X_1, \ldots, X_m$, for simplicity defined on the same space $\mathbb{X}$. It is straightforward to extend the joint index of dependence: with $d$ distance on $\mathbb{X}^m$ we can define $D_\otimes(X_1, \ldots,X_m)$ as 
\begin{equation*}
\W_{d,p} \left( \mathscr{L}(X_1, \ldots, X_m), \mathscr{L}(X_1) \otimes \ldots \otimes \mathscr{L}(X_m) \right).
\end{equation*}
Then $D_\otimes \geq 0$, and $D_\otimes(X_1, \ldots,X_m) = 0$ if and only if $X_1, \ldots,X_m$ are (mutually) independent. As in the case $m=2$, for a generic $m \ge 2$ the measure is symmetric if the cost function is symmetric, in the sense $D_\otimes(X_1, \ldots,X_m) = D_\otimes(X_{\sigma(1)}, \ldots,X_{\sigma(m)})$ for any $\sigma$ permutation of $\{1,\ldots,m \}$. It is interesting to understand the relation between measures of dependence across dimensions, with an immediate consequence being the following.

\begin{lemma}
Assume $(X_1, \ldots, X_m)$ is independent from $X_{m+1}$ . Then, 
\begin{equation*}
D_\otimes(X_1, \ldots, X_{m+1}) \leq D_\otimes(X_1, \ldots,X_m). 
\end{equation*}
\end{lemma}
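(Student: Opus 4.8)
The plan is to exploit the subadditivity of the Wasserstein distance under products of measures. The key observation is that both $\mathscr{L}(X_1,\ldots,X_{m+1})$ and $\mathscr{L}(X_1)\otimes\cdots\otimes\mathscr{L}(X_{m+1})$ factor, thanks to the independence hypothesis, as products of a measure on $\mathbb{X}^m$ with a measure on $\mathbb{X}$. Indeed, since $(X_1,\ldots,X_m)$ is independent of $X_{m+1}$, we have $\mathscr{L}(X_1,\ldots,X_{m+1}) = \mathscr{L}(X_1,\ldots,X_m)\otimes\mathscr{L}(X_{m+1})$, while trivially $\mathscr{L}(X_1)\otimes\cdots\otimes\mathscr{L}(X_{m+1}) = \big(\mathscr{L}(X_1)\otimes\cdots\otimes\mathscr{L}(X_m)\big)\otimes\mathscr{L}(X_{m+1})$. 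So $D_\otimes(X_1,\ldots,X_{m+1})$ is a Wasserstein distance between two product measures that share the same second factor $\mathscr{L}(X_{m+1})$.

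The next step is to bound the Wasserstein distance between two such products by the Wasserstein distance between their first factors. Concretely, take an optimal coupling $\gamma^*$ between $\mathscr{L}(X_1,\ldots,X_m)$ and $\mathscr{L}(X_1)\otimes\cdots\otimes\mathscr{L}(X_m)$ on $\mathbb{X}^m\times\mathbb{X}^m$, and tensor it with the diagonal (identity) coupling of $\mathscr{L}(X_{m+1})$ with itself; this produces a valid coupling of $\mathscr{L}(X_1,\ldots,X_{m+1})$ and $\mathscr{L}(X_1)\otimes\cdots\otimes\mathscr{L}(X_{m+1})$. For this candidate coupling the $(m+1)$-st coordinates always agree, so they contribute zero to the transport cost, provided the ground distance $d$ on $\mathbb{X}^{m+1}$ is built from the distance on $\mathbb{X}^m$ in a way that ignores a pair of equal last coordinates — which holds for the $\ell_q$-type product distances of Remark~\ref{rem:symmetry}. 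Evaluating $\W_{d,p}^p$ on this coupling then gives exactly $\E(d_{\mathbb{X}^m}((X_1,\ldots,X_m),(X_1',\ldots,X_m'))^p)$ under $\gamma^*$, i.e.\ $D_\otimes(X_1,\ldots,X_m)^p$, and taking the infimum over all couplings on the left only decreases this, yielding the claim.

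I do not expect any genuine obstacle here; the result is an immediate consequence of the product structure and the definition of $\W$ via couplings. The one point requiring a word of care is the compatibility between the ground distance on $\mathbb{X}^{m+1}$ and the one on $\mathbb{X}^m$ used implicitly in $D_\otimes(X_1,\ldots,X_m)$: the argument uses that $d((x,t),(x',t)) = d_{\mathbb{X}^m}(x,x')$ for $x,x'\in\mathbb{X}^m$ and $t\in\mathbb{X}$, which is the natural consistency condition satisfied by the $\ell_q$ product distances. Under that (implicit) convention the proof is a two-line argument: factor both measures using independence, tensor the optimal coupling with the identity coupling on the last variable, and observe the last coordinate contributes nothing to the cost.
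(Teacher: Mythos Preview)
Your argument is correct and is precisely the standard justification: factor both measures as $(\text{law on }\mathbb{X}^m)\otimes\mathscr{L}(X_{m+1})$ using the independence hypothesis, tensor an optimal coupling of the first factors with the identity coupling on the last, and observe that the last coordinate contributes nothing under an $\ell_q$-type product metric. The paper does not give a proof at all, presenting the lemma as an ``immediate consequence''; your write-up is exactly the two-line argument that makes it so, including the appropriate caveat about compatibility of the ground metrics on $\mathbb{X}^m$ and $\mathbb{X}^{m+1}$.
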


Finding appropriate upper bounds for building an index becomes more challenging, with many possible notions of maximal dependence. \cite{DeKeyserGijbels2025} extends the index \eqref{def:reference_measure} of \cite{mordant2022measuring} to this setting, providing a list of desirable properties. \cite{Catalano2024} uses a similar idea to measure the dependence for an arbitrary number of L\'evy measures with the same marginal distribution, computing a distance with respect to maximal dependence as in Section~\ref{sec:alternative}.

\subsection{Geometry of optimal transport}

The theory of optimal transport may be used to measure dependence in other ways. An interesting direction has been initiated by \cite{PetersenMueller2019} to measure dependence between random density functions using the geometry of optimal transport on $\R$.
The full description of the construction involves the tangent space to the space of probability measures, parallel transport, and Wasserstein barycenters, which would not fit in this survey. We only report the final expression: if $\tilde f_1$ and $\tilde f_2$ are two random densities, the resulting measure takes a simple form
\[
\cov_{\otimes}(\tilde f_1, \tilde f_2) = \int_0^1 \cov(\tilde F_1^{-1}(t), \tilde F_2^{-1}(t)) \mathrm{d}t, 
\]
where $\tilde F_i$ denote the corresponding random cumulative distribution functions, for $i=1,2$, and $\cov$ is the standard covariance between random variables in $\R$. It
provides an alternative way to define a symmetric measure of dependence for a class of infinite-dimensional random objects. It was extended by \citet{zhou2021intrinsic} to the case of a stochastic process valued in the space of measures, that is, when the data consists of $(\tilde{f}_1(s), \tilde{f}_2(s))_{s \geq 0}$ a collection of pairs of random densities.

\section{Conclusions}
\label{sec:conclusions}

The rise of Wasserstein-based measures of dependence is driven by both the maturity of the field of Optimal Transport and the need to extend dependence concepts beyond real-valued random variables, unlocking new possibilities for analyzing dependence in more abstract spaces.

Despite many recent proposals, the field remains open. Even in the real-valued case, measuring the deviation from maximal dependence introduces new challenges and questions what a good notion of maximal dependence should be. More broadly, key questions remain on computational feasibility, robustness, and natural extensions to general metric spaces. Addressing these will further solidify the use of Wasserstein-based measures of dependence as a powerful tool in modern Statistics.

\bibliography{bibsurvey}

\end{document}